\def\Section{\setcounter{equation}{0}\section}
\newtheorem{theorem}{Theorem}[section]
\newtheorem{lemma}[theorem]{Lemma}
\newtheorem{definition}[theorem]{Definition}
\newtheorem{proposition}[theorem]{Proposition}
\newtheorem{remark}[theorem]{Remark} 
\newtheorem{assumption}[theorem]{Assumption}
\def\thetheorem{\thesection.\arabic{theorem}}
\def\thesection{\arabic{section}}
\def\theequation {\thesection.\arabic{equation}}
\def\beq{\begin{equation}\displaystyle}
\def\eeq{\end{equation}}
\def\bel{\begin{equation} \displaystyle \begin{array}{l} }
\def\eel{\end{array} \end{equation} }
\def\bell{\begin{equation} \displaystyle \begin{array}{ll}  }
\def\eell{\end{array} \end{equation} }
\def\bea{\begin{eqnarray}}
\def\eea{\end{eqnarray} }
\def\bean{\begin{eqnarray*}}
\def\eean{\end{eqnarray*} }
\newenvironment{proof}{\noindent{\bf Proof.~}}
{{\mbox{}\hfill {\small \fbox{}}\\}}
\def\qed{\mbox{}\hfill {\small \fbox{}}\\}
\renewcommand\appendix{\bigskip {\noindent \Large \bf Appendix}
  \setcounter{section}{0}%
  \setcounter{subsection}{0}%
\setcounter{equation}{0}%
\setcounter{theorem}{0}%
\def\thetheorem{A.\arabic{theorem}}
\def\theequation {A.\arabic{equation}}}
\def\NN{\mathbb{N}}
\def\RR{\mathbb{R}}
\def\LL{\mathbb{L}}
\def\intdouble {\int\mbox{\hspace{-3mm}}\int}
\def\ds{\displaystyle}
\def\bs{\bigskip}
\def\eps{\varepsilon}
\def\pa{\partial}
\def\calM{{\cal M}}
\def\calW{{\cal W}}
\def\calP{{\cal P}}
\def\calD{{\cal D}}
\def\smes{{\cal S}_{\cal M}}
\def\Dpetit{{\mbox{\tiny $\Delta$}}}
\def\achapo{\widehat{a}}
\def\bchapo{\widehat{b}}
\title{Equivalence between duality and gradient flow solutions 
for one-dimensional aggregation equations}
\date{}
\begin{document}

\maketitle

\begin{center}
F. James$^a$ and N. Vauchelet$^{b}$

\bs

 \medskip

 {\footnotesize $^a$ 
Math\'ematiques -- Analyse, Probabilit\'es, Mod\'elisation -- Orl\'eans (MAPMO), \\
Universit\'e d'Orl\'eans \& CNRS UMR 7349, \\
F\'ed\'eration Denis Poisson, Universit\'e d'Orl\'eans \& CNRS FR 2964, \\
45067 Orl\'eans Cedex 2, France}
 \\

 {\footnotesize $^b$ 
UPMC Univ Paris 06, UMR 7598, Laboratoire Jacques-Louis Lions, \\
CNRS, UMR 7598, Laboratoire Jacques-Louis Lions and \\
INRIA Paris-Rocquencourt, EPI MAMBA \\
F-75005, Paris, France}
 \\

 \medskip
 {\footnotesize {\em E-mail addresses:} 
 {\tt francois.james@univ-orleans.fr, vauchelet@ann.jussieu.fr}
 }

\end{center}

\bs

\begin{abstract}
Existence and uniqueness of global in time measure solution for a
one dimensional nonlinear aggregation equation is considered.
Such a system can be written as a conservation law with a 
velocity field computed through a self-consistent interaction potential.
Blow up of regular solutions is now well established for such 
system. In Carrillo et al. (Duke Math J (2011)) \cite{Carrillo},
a theory of existence and uniqueness based on the geometric
approach of gradient flows on Wasserstein space has been developed.
We propose in this work to establish the link between this approach 
and duality solutions. This latter concept of solutions 
allows in particular to define a flow associated to the velocity field.
Then an existence and uniqueness theory for duality solutions is 
developed in the spirit of James and Vauchelet (NoDEA (2013)) \cite{jamesnv}.
However, since duality solutions are only known in one dimension, we 
restrict our study to the one dimensional case.
\end{abstract}

\bs

{\bf Keywords: } duality solutions, aggregation equation, nonlocal conservation equations, measure-valued solutions, gradient flow, optimal transport.

{\bf 2010 AMS subject classifications: } 35B40, 35D30, 35L60, 35Q92, 49K20.

\bs

\Section{Introduction}
Aggregation phenomena in a population of particles 
interacting under a continuous interaction potential are modelled by a 
nonlocal nonlinear conservation equation. Letting $\rho$ denote the density of cells,
the so-called aggregation equation in $N$ space dimension writes
	\beq\label{EqInter}
\pa_t\rho + \nabla_x\cdot\big(a(\nabla_xW*\rho) \rho\big) = 0, \qquad t>0,\quad x\in\RR^N,
	\eeq
and is complemented with the inital condition $\rho(0,x)=\rho^{ini}$. Here 
$W\,:\,\RR^N\to\RR$ is the interaction potential, and $a\,:\,\RR^N\to\RR^N$ is a smooth
given function which depends on the actual model under consideration.
In this paper, we only focus on the strongly attractive case
and consider attractive pointy and Lipschitz potentials $W$
(see Definition \ref{pointyPot} below) and nondecreasing smooth function $a$.

This equation is involved in many applications in physics and biology.
In the framework of granular media \cite{benedetto,GME,li}, $a$ is the identity function, and
interaction potentials are in the form $W(x)=-|x|^\alpha$ with $\alpha>1$.
In plasma physics, the context is the high field limit of a kinetic equation describing 
the dynamics of electrically charged Brownian particles interacting
with a thermal bath. This leads to consider potentials in the form $W(x)=-|x|$, and
$a=\mbox{id}$ as well (see e.g. \cite{NPS}).
Also, continuum mathematical models have been widely proposed to model collective
behaviour of individuals. 
Then the potential $W$ is typically the fundamental solution 
of some elliptic equation, and $a$ depends on the microscopic behaviour of the individuals.
In the context of pedestrian motion nonlinear functions $a$ are considered
but with smooth potential $W$
(see \cite{pieton} and later references with generalizations to systems in \cite{pieton2}).
The well-known Patlak-Keller-Segel model describes aggregation of cells 
by a macroscopic non-local interaction equation with linear diffusion \cite{keller,patlack}.
More precisely, the swarming of cells can be described 
by aggregation equations where the typical interaction potential is 
the attractive Morse potential $W(x)=\frac 12 e^{-|x|}$ \cite{burger, morale, okubo}.
Such potentials also appear when considering the hydrodynamic limit of
kinetic model describing chemotaxis of bacteria \cite{dolschmeis,filblaurpert,jamesnv}.

Most of the potentials mentioned above have a singularity at the origin, they fall into the context
of ``pointy potentials'' (see a precise definition below), and
it is well-known that in that case concentration phenomena induce the blow-up in 
finite time of weak $L^p$ solutions \cite{Bertozzi1, Bertozzi2, Bertozzi3}.
Thus the notion of solution breaks down at the blow-up time and weak
measure-valued solutions for the aggregation equation have to be considered
\cite{balague,Bertozzi4, Bertozzi5}.
Carrillo et al. \cite{Carrillo} have studied the multidimensional
aggregation equation when $a=\mbox{id}$ in the framework of gradient flow solutions. 
Namely, equation \eqref{EqInter} is interpreted as a differential equation in time, the
right-hand side being the gradient of some interaction energy defined through the potential $W$.
This idea, known as the Otto calculus (see \cite{Otto,Villani1}), requires the choice of a convenient 
space of probability measures endowed with a Riemannian structure. Then, following
\cite{Ambrosio}, gradient flow solutions are interpreted as curves of maximal slopes in
this space. The authors obtain existence and uniqueness of weak solutions
for \eqref{EqInter} in $\RR^N$, $N\geq 1$ when $a=\mbox{id}$, the main problem being
now to connect these solutions to distributional solutions.

An alternative notion of weak solutions has been obtained by completely different means 
in the framework of positive chemotaxis in \cite{jamesnv}. Here equation \eqref{EqInter}
with $W(x)=\frac 12 e^{-|x|}$ can be obtained thanks to some hydrodynamic 
limit of the kinetic Othmer-Dunbar-Alt system \cite{dolschmeis}.
The key idea is to use the notion of duality solutions, introduced in \cite{bj1} for linear conservation
equations with discontinuous velocities, where measure-valued solutions also arise. In that case, this allows 
to give a convenient meaning to the product of the velocity by the density, so that existence and uniqueness can be proved.
When applying this strategy to the nonlinear case, it turns out that uniqueness is not ensured, unless the nonlinear product
is given a very precise signification, see for instance \cite{BJpg,CRAS}. In the case of chemotaxis, it 
is provided by the limit of the flux in the kinetic model. Once this is done, existence and uniqueness can be obtained.
An important consequence of this approach is that it allows to define a flow associated to this system.
Then the dynamics of the aggregates (i.e. combinations of Dirac masses) can be established, giving rise to
an implementation of a particle method and numerical simulations of the dynamics 
of cells density after blow-up (see also \cite{hyp_proc} for a 
numerical approach using a discretization on a fixed grid).
The principal drawback of this method is its limitation to the one-dimensional case, mainly because duality solutions 
are not properly defined yet in higher space dimension.
Thus the theory developed in \cite{Carrillo}
is, up to our knowledge, the only one allowing to get existence
of global in time weak measure solution for \eqref{EqInter} in 
dimension higher than $2$. Another possibility could be using the notion
of Filippov flow \cite{Filippov}, together with the stability results in  
\cite{Bianchini}, to obtain a convenient notion of solution to \eqref{EqInter},
thus following \cite{Poupaud}.

This work is devoted to the study of the links between these two notions
of weak solutions for equation \eqref{EqInter}, in the one-dimensional setting. 
As we shall see there is no equivalence strictly speaking for a general potential and a nonlinear function $a$.
More precisely we first consider the same situation as in \cite{Carrillo}, that is a pointy potential $W$,
and $a=\mbox{id}$. We adapt the proof of \cite{jamesnv} to define duality solutions in this context, and choose a convenient
space of measures to be compatible with the gradient flows. Then we prove that duality solutions and gradient 
flow solutions are identical (Theorem \ref{th:link} below), thus answering the questions raised by Remark 2.16 of \cite{Carrillo}.

Next, we investigate the nonlinear case, that is $a\ne\mbox{id}$. Notice that additional monotonicity properties
are required to ensure the attractivity of the dynamics.
The results of \cite{Carrillo} cannot be applied as they stand, the key problem is 
to define a new energy for which weak solutions of \eqref{EqInter}
are gradient flows. However, we are able 
to find such an energy only in the particular case $W=- \frac 12 |x|$.
On the contrary existence of duality solutions for \eqref{EqInter} 
with a nonlinear function $a$ can be obtained for more general potentials $W$, even if
we cannot reach the complete generality of \cite{Carrillo}. 
As in the linear case, for this specific choice of potential, we have equivalence
of duality and gradient flows solutions. Moreover, in this case, this solution
can be seen as the derivative of the entropy solution of a scalar conservation law
(Theorem \ref{th:GFanonid} below).

The outline of this paper is as follows.
In the next Section, we introduce notations and recall the main 
results obtained in \cite{Carrillo} in the case $a=\mbox{id}$. A sketch of their proof is proposed.
Section \ref{dualsol} is devoted to the duality solutions, and starts by recalling their original definition and main
properties. Next, we turn to the nonlinear setting, and define precisely the velocities and fluxes
that allow to state the existence and uniqueness results both for $a=\mbox{id}$ and $a\ne\mbox{id}$.
The case $a=\mbox{id}$ is treated in Section \ref{aId}: existence and uniqueness for duality solutions are proved, 
together with equivalence between gradient flows and duality solutions. 
Finally in Section \ref{anonid} we investigate the case $a\neq \mbox{id}$, 
where general equivalence results no longer hold.
For the completeness of the paper, a technical Lemma is given in Appendix.

\Section{Gradient flow solutions}\label{GFsol}
\subsection{Notations and definitions}
Let $C_0(Y,Z)$ be the set of continuous functions from $Y$ to $Z$ 
that vanish at infinity and $C_c(Y,Z)$ the set of continuous 
functions with compact support from $Y$ to $Z$, 
where $Y$ and $Z$ are metric spaces.
All along the paper, we denote $\calM_{loc}(\RR^N)$ the space 
of local Borel measures on $\RR^N$. For $\rho\in {\cal M}_{loc}(\RR^N)$
we denote by $|\rho|(\RR^N)$ its total variation.
We will denote $\calM_b(\RR^N)$ the space of measures in $\calM_{loc}(\RR^N)$
with finite total variation.
From now on, the space of measures $\calM_b(\RR^N)$ is
always endowed with the weak topology $\sigma({\cal M}_b,C_0)$.
We denote $\smes :=C([0,T];{\cal M}_b(\RR^N)-\sigma({\cal M}_b,C_0))$.
We recall that if a sequence of measures $(\mu_n)_{n\in \NN}$ 
in $\calM_b(\RR^N)$ satisfies $\sup_{n\in \NN} |\mu_n|(\RR^N)<+\infty$,
then we can extract a subsequence that converges for the weak topology
$\sigma({\cal M}_b,C_0)$.

Since we focus on scalar conservation laws, we can assume without loss
of generality that the total mass of the system is scaled to $1$ and 
thus we will work in some space of probability measures, namely 
the Wasserstein space of order $q\geq 1$, which is the space
of probability measures with finite order $q$ moment:
	$$
\calP_q(\RR^N) = \left\{\mu \mbox{ nonnegative Borel measure}, \mu(\RR^N)=1, \int |x|^q \mu(dx) <\infty\right\}.
	$$
This space is endowed with the Wasserstein distance defined by (see e.g. \cite{Villani1, Villani2})
	\beq\label{defWp}
d_{Wq}(\mu,\nu)= \inf_{\gamma\in \Gamma(\mu,\nu)} \left\{\int |y-x|^q\,\gamma(dx,dy)\right\}^{1/q}
	\eeq
where $\Gamma(\mu,\nu)$ is the set of measures on $\RR^N\times\RR^N$ with marginals $\mu$ and $\nu$, i.e.
	$$\begin{array}{c}
\Gamma(\mu,\nu) = \displaystyle\left\{\gamma\in \calP_q(\RR^N\times\RR^N); \ \forall\, \xi\in C_0(\RR^N),
\int \xi(y_0)\gamma(dy_0,dy_1) = \int \xi(y_0) \mu(dy_0), \right.\\[2mm]
\displaystyle\left.\int \xi(y_1)\gamma(dy_0,dy_1) = \int \xi(y_1) \nu(dy_1) \right\}.
	\end{array}$$
From a simple minimization argument, we know that in the definition of $d_{Wq}$ 
the infimum is actually a minimum.
A map that realizes the minimum in the definition \eqref{defWp}
of $d_{Wq}$ is called an optimal map, the set of which is denoted by $\Gamma_0(\mu,\nu)$.

A fundamental breakthrough in the use of the geometric approach to 
solve PDE is the work of F. Otto \cite{Otto}, which is the basis
of the so-called {\it Otto Calculus} \cite{Villani1}.
Let $X$ be a Riemannian manifold endowed with the Riemannian metric 
$g_x(\cdot,\cdot)$ (a positive quadratic form on the tangent space at $X$ in $x$ denoted $T_xX$).
Let $\calW : X\to \RR$ be differentiable. The gradient of $\calW$ at $x\in X$ is 
defined as follows~: for all $v\in T_xX$, let $\gamma(t)$ be a regular curve on $X$ 
such that $\gamma(0)=x$ and $\gamma'(0)=v$, then
	$$
\frac{d}{dt}|_{t=0} \calW(\gamma)(t) = g_x(\nabla_x \calW,v), \qquad 
\nabla_x\calW \in T_xX.
	$$
The gradient flow associated to $\calW$ is the solution
$\rho:[0,+\infty)\to X$ of the differential equation~:
	$$
\frac{d\rho}{dt} = -\nabla_\rho \calW.
	$$
A fundamental result due to Ambrosio et al. \cite{Ambrosio} states
that gradient flows are equivalent to curves of maximal slope. 
Therefore, solving a PDE model of gradient type boils down to 
prove the existence of a curve of maximal slope.

Let us be more precise.
In the following, we will mainly focus on the case $q=2$ and we will
shortly denote $d_W$ instead of $d_{W2}$.
In the formalism of \cite{Ambrosio},
we say that a curve $\mu$ is absolutely continuous, and we denote
$\mu \in AC^2((0,+\infty),\calP_2(\RR^N))$, if there exists $m \in L^2(0,+\infty)$, such that 
$d_W(\mu(s),\mu(t))\leq \int_s^t m(r)dr$, for $0<s\leq t<+\infty$.
Then we can define the metric derivative
	$$
|\mu'|(t) := \limsup_{s\to t} \frac{d_W(\mu(s),\mu(t))}{|s-t|}.
	$$
The tangent space to a measure $\mu\in \calP_2(\RR^N)$ is defined by
the closed vector subspace of $L^2(\mu)$ 
	$$
Tan_\mu \calP_2(\RR^N) := \overline{\{\nabla \phi : \phi \in C_c^\infty(\RR^N)\}}^{L^2(\mu)}.
	$$

We recall the result of Theorem 8.3.1 of \cite{Ambrosio}:
if $\mu \in AC^2((0,+\infty),\calP_2(\RR^N))$, then there exists a Borel
vector field $v(t)\in L^2(\mu(t))$ such that
	\beq\label{EqCont}
\pa_t\mu + \nabla\cdot (v \mu) = 0, \mbox{ in the distributional sense on } (0,+\infty)\times \RR^N.
	\eeq
Conversely, if $\mu$ solves a continuity equation for some Borel velocity
$v\in L^1((0,+\infty);L^2(\mu))$ then $\mu$ is an absolutely continuous
curve and $|\mu'|(t)\leq \|v(t)\|_{L^2(\mu)}$.
As a consequence, we have
	$$
|\mu'|(t) = \min\left\{\|v\|_{L^2(\mu(t))}: v(t)\mbox{ satisfies }\eqref{EqCont}\right\}.
	$$

Let $\calW$ be a functional on $\calP_2(\RR^N)$.
We denote by $\partial\calW$ its subdifferential. The general definition of subdifferential 
on $\calP_2(\RR^d)$ being pretty involved, we refer the interested reader 
to \cite[Definition 10.3.1]{Ambrosio}.
In principle, the element of $\partial\calW$ are plans. In the case at hand, 
such plans are concentrated on the graph of a vector field, which allows to 
reduce the general definition of subdifferential to the following one:
a vector field $w\in L^2(\mu)$ is said to be an element of the subdifferential of $\calW$
at $\mu$ if
$$
\calW(\mu)-\calW(\nu) \geq \inf_{\gamma\in \Gamma_0(\mu,\nu)} \int_{\RR^N\times \RR^N} 
w(x)\cdot (y-x) d\gamma(x,y) + o(d_{W2}(\mu,\nu)).
$$
Next, the slope $|\pa \calW|$ is defined by
	\beq\label{slope}
|\pa \calW|(\mu) = \limsup_{\nu\to \mu} \frac{(\calW(\mu)-\calW(\nu))_+}{d_{W}(\mu,\nu)},
	\eeq
where $u_+=\max\{u,0\}$. We have the property (\cite{Ambrosio}, Lemma 10.1.5)
	\beq\label{slopmin}
|\pa\calW|(\mu) = \min \{\|w\|_{L^2(\mu)}\,:~ w\in \pa\calW(\mu)\}.
	\eeq
Moreover, there exists a unique $w\in\calW(\mu)$ which attains the minimum in \eqref{slopmin}.
It is denoted by $\partial^0\calW(\mu)$.

\begin{definition}[Gradient flows]\label{defgradflow}
We say that a map $\mu\in AC^2_{loc}((0,+\infty);\calP_2(\RR^N))$
is a solution of a gradient flow equation associated to the functional
$\calW$ if there exists a Borel vector field $v$ such that $v(t)\in Tan_{\mu(t)}\calP_2(\RR^N)$ for a.e. $t>0$,
$\|v(t)\|_{L^2(\mu)} \in L^2_{loc}(0,+\infty)$, the continuity equation 
	$$
\pa_t \mu + \nabla\cdot \big(v \mu \big)=0,
	$$
holds in the sense of distributions and $v(t)\in -\pa \calW(\rho(t))$ for a.e. 
$t>0$, where $\pa\calW(\rho)$ is the subdifferential of $\calW$ at the point 
$\rho$.
\end{definition}

	\begin{definition}[Curve of maximal slope]\label{defmaxslop}
A curve $\mu\in AC^2_{loc}((0,+\infty);\calP_2(\RR))$ is a curve of 
maximal slope for the functional $\calW$ if $t\mapsto \calW(\mu(t))$ is 
an absolutely continuous function and if for every $0\leq s\leq t\leq T$,
	$$
\frac 12 \int_s^t |\mu'|^2(\tau)\,d\tau + \frac 12 \int_s^t |\pa\calW|^2(\mu(\tau)) d\tau
\leq \calW(\mu(s))-\calW(\mu(t)).
	$$
	\end{definition} 

Finally Theorem 11.1.3 of \cite{Ambrosio} shows that curves of maximal slope with respect
to $|\partial\calW|$ are equivalent to gradient flow solutions. 
Moreover, the tangent vector field $v(t)$ is the unique element of
minimal norm in the subdifferential of $\calW$ (see \eqref{slopmin}):
	$$
v(t)=-\pa^0 \calW(\mu(t)) \mbox{ for a.e. } t>0.
	$$

\subsection{Strategy of the proof in \cite{Carrillo}}\label{sec:Carrillo}
The idea of the work by Carrillo et al. \cite{Carrillo} is to extend the
work of \cite{Ambrosio} to an interaction energy $\calW$ defined 
through the interaction potential $W$ in \eqref{EqInter}, whose derivative has a 
singularity in $0$. More precisely, attractive ``pointy potentials'' are considered, which we define now.
	\begin{definition}[pointy potential]\label{pointyPot}
The interaction potential $W$ is said to be an attractive 
{\it pointy potential} if it satisfies the following assumptions.
\begin{itemize}
\item[{\bf (A0)}] $W$ is continuous, $W(x)=W(-x)$ and $W(0)=0$.
\item[{\bf (A1)}] $W$ is $\lambda$-concave for some $\lambda\geq 0$, i.e.
  $W(x)-\frac{\lambda}{2}|x|^2$ is concave.
\item[{\bf (A2)}] There exists a constant $C>0$ such that $W(x)\geq -C(1+|x|^2)$
for all $x\in\RR^N$.
\item[{\bf (A3)}] $W\in C^1(\RR^N\setminus\{0\})$.  
\end{itemize}
	\end{definition}
Given a continuous potential $W:\RR\to \RR$, we define the interaction energy in one dimension by
\beq\label{nrjinter}
\calW(\rho)= -\frac 12\int_{\RR\times\RR} W(x-y)\,\rho(dx)\rho(dy).
\eeq
The existence and uniqueness result of \cite{Carrillo} can now be synthetized as follows.
	\begin{theorem}\label{GradFlow}(\cite[Theorems 2.12 and 2.13]{Carrillo})
Let $W$ satisfies assumptions {\bf (A0)--(A3)} and let $a=\mbox{id}$.
Given $\rho^{ini}\in \calP_2(\RR^N)$,
there exists a gradient flow solution of \eqref{EqInter}, 
i.e. a curve $\rho\in AC_{loc}^2([0,\infty);\calP_2(\RR^N))$ satisfying
	$$
\begin{array}{l}
\ds \frac{\pa \rho(t)}{\pa t} + \pa_x (v(t)\rho(t))=0, \qquad \mbox{in }\calD'([0,\infty)\times \RR^N), \\
\ds v(t,x)=-\pa^0 \calW(\rho)(t,x) = \int_{y\neq x} \nabla W(x-y)\,\rho(t,dy),
\end{array}
	$$
with $\rho(0)=\rho^{ini}$. Moreover, if $\rho_1$ and $\rho_2$
are such gradient flow solutions, then there exists a constant $\lambda$
such that, for all $t\geq 0$
	$$
d_W(\rho_1(t),\rho_2(t)) \leq e^{\lambda t} d_W(\rho_1(0),\rho_2(0)).
	$$
Thus the gradient flow solution of \eqref{EqInter} with initial data $\rho^{ini}\in \calP_2(\RR^N)$ is unique.
Moreover, the following energy identity holds for all $0\leq t_0\leq t_1 < \infty$:
	\begin{equation}\label{EstimEnerg}
\int_{t_0}^{t_1} \int_\RR |\partial^0W*\rho|^2 \rho(t,dx)dt + \calW(\rho(t_1)) = \calW(\rho(t_0)).
	\end{equation}
\end{theorem}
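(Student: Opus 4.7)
The plan is to apply the abstract theory of Ambrosio-Gigli-Savar\'e (AGS) \cite{Ambrosio} on gradient flows of $\lambda$-convex functionals in Wasserstein space to the specific interaction energy $\calW$ defined by \eqref{nrjinter}, and then to identify the velocity field explicitly. The key preliminary task is to verify that $\calW$ is $(-\lambda)$-convex along generalized geodesics in $\calP_2(\RR^N)$, where $\lambda\geq 0$ is the semiconcavity constant from (A1). Since $\calW$ is a double integral of $W(x-y)$, this reduces to the pointwise $\lambda$-concavity of $W$: for a coupling $\sigma$ and the interpolation $\mu_s = ((1-s)\pi^1 + s\pi^2)_\#\sigma$, one inserts the affine interpolation of $x-y$ into $W$ and applies (A1), modulo a cross-term that is controlled by the generalized Wasserstein distance. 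Combined with lower semicontinuity of $\calW$ (from (A0) and narrow continuity of the tensor product) and the coercivity bound (A2), this places $\calW$ within the scope of the AGS abstract framework.

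Existence of a curve $\rho \in AC^2_{loc}((0,\infty);\calP_2(\RR^N))$ then follows from the minimizing movements (JKO) scheme: iteratively minimizing $\mu\mapsto \frac{1}{2\tau}d_W^2(\mu,\mu_k^\tau) + \calW(\mu)$ produces, in the limit $\tau\to 0$, a curve of maximal slope in the sense of Definition \ref{defmaxslop}, which by the result recalled in the excerpt after \eqref{slopmin} coincides with a gradient flow solution. To identify the tangent velocity as $v(t,x) = \int_{y\neq x}\nabla W(x-y)\rho(t,dy)$, I would compute the Fr\'echet subdifferential of $\calW$ from the very definition given before \eqref{slope}: for a perturbation $\nu = \theta_\#\rho$, a direct expansion of $\calW(\nu) - \calW(\rho)$, combined with (A1) to absorb the second-order terms, shows that $\xi(x) = -\int_{y\neq x}\nabla W(x-y)\rho(dy)$ lies in $\partial\calW(\rho)$; by $(-\lambda)$-convexity and \eqref{slopmin} it is then the element of minimal $L^2(\rho)$-norm, i.e.\ $\xi = \partial^0\calW(\rho)$. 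The contraction estimate $d_W(\rho_1(t),\rho_2(t)) \leq e^{\lambda t} d_W(\rho_1(0),\rho_2(0))$ follows from the $(-\lambda)$-convexity along generalized geodesics via the evolution variational inequality of AGS, which also yields uniqueness. Finally, the energy identity \eqref{EstimEnerg} is obtained by noting that for gradient flows the inequality in Definition \ref{defmaxslop} saturates, since $|\mu'|(t) = \|v(t)\|_{L^2(\rho)} = |\partial\calW|(\rho(t))$.

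The main obstacle, which is the technical heart of \cite{Carrillo}, is the careful handling of the singularity of $\nabla W$ at the origin. Assumption (A3) asserts only $W\in C^1(\RR^N\setminus\{0\})$, so $\nabla W(0)$ is not defined and the correct prescription is to integrate over $y\neq x$; one must prove that this recipe produces an $L^2(\rho)$ function for every $\rho\in\calP_2(\RR^N)$ and that the resulting vector field coincides with the abstract element of minimal $L^2(\rho)$-norm in $\partial\calW(\rho)$. This becomes particularly delicate when $\rho$ contains atoms, as is indeed produced by the aggregation dynamics themselves: the contribution of the diagonal $\{y=x\}$, which would encode an ill-defined ``self-interaction'' of a Dirac mass with itself, must be rigorously excluded, while the remaining off-diagonal part must supply the correct attractive drift. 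Exactly this mechanism is what allows the theory to survive beyond the finite-time blow-up of classical $L^p$ solutions.
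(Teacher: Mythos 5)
Your proposal follows essentially the same route as the paper's sketch of the proof from \cite{Carrillo}: verify that the interaction energy fits the Ambrosio--Gigli--Savar\'e framework, identify the element of minimal norm of the subdifferential as $\pa^0W*\rho$ (integration over $y\neq x$), construct the solution by the JKO scheme as a curve of maximal slope, and conclude via Theorem 11.1.3 of \cite{Ambrosio}, with the $\lambda$-type contraction estimate yielding uniqueness and the saturation of the maximal-slope inequality yielding the energy identity. The only cosmetic difference is that you phrase the contraction through the evolution variational inequality rather than the direct Gronwall estimate of \cite{Carrillo}, which stays within the same circle of ideas.
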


\begin{proof} We summarize here the main steps of the proof and 
refer the reader to \cite{Carrillo} for more details.
The first step is to compute the element of minimal norm in the 
subdifferential of $\calW$. By extending Theorem 10.4.11 of
\cite{Ambrosio}, the authors prove \cite[Proposition 2.6]{Carrillo} that
	\beq\label{sousdifW}
-\pa^0\calW(\rho) = \pa^0 W * \rho, 
	\eeq
where 
	$$
\pa^0 W * \rho(x)= \int_{y\neq x} W'(x-y)\,\rho(dy).
	$$

The second step is based on the so-called {\it JKO scheme} introduced in \cite{JKO}
(see also \cite{Ambrosio}). It consists in the following recursive construction for curves of maximal slope.
Let $\tau>0$ be a small time step, we set $\rho_0^\tau = \rho^{ini}$ the initial data for \eqref{EqInter}.
Next, knowing $\rho_k^\tau$, one proves \cite[Proposition 2.5]{Carrillo} that there exists $\rho_{k+1}^\tau$ such that
	\beq\label{JKO}
\rho_{k+1}^{\tau} \in \underset{\rho\in \calP_2(\RR^N)}{\arg\min} \left\{
\calW(\rho) + \frac{1}{2\tau} d_W^2(\rho_k^\tau,\rho)\right\}.
	\eeq
Next, a piecewise constant interpolation $\rho^\tau$ is defined by 
	$$
\rho^\tau(0)=\rho^{ini}\ ;\qquad \rho^\tau(t)=\rho_k^\tau 
\quad \mbox{ if } \quad t\in (k \tau,(k+1)\tau],
	$$
and Proposition 2.6 states the weak compactness (in the narrow topology) of the sequence $\rho^\tau$ as 
$\tau\to 0$.
Finally Theorem 2.8 ensures that the weak narrow limit $\rho$ is a curve of maximal slope.

The conclusion follows by applying Theorem 11.1.3 of \cite{Ambrosio}, which allows therefore to get the existence 
of a gradient flow for the functional $\calW$. By definition, the gradient flow is a 
solution of a continuity equation whose velocity field is the 
element with minimal norm of the subdifferential of $\calW$. In the first
step of the proof this element has been identified to be $\pa^0W*\rho$.
Thus, it is a weak solution of the problem and moreover we have the 
energy estimate \eqref{EstimEnerg}.
\end{proof}

\subsection{The one-dimensional case}\label{sec:1D}
We gather here several remarks specific to the one-dimensional framework. First we notice that 
assumptions {\bf (A1)} and {\bf (A3)} imply that
$x\mapsto W'(x)-\lambda x$ is a nonincreasing function on $\RR\setminus\{0\}$.
Therefore $\lim_{x\to 0^{\pm}} W'(x) = W'(0^{\pm})$ exists and from 
{\bf (A0)}, we deduce that $W'(0^-)=-W'(0^+)$.
Moreover, for all $x>y$ in $\RR\setminus\{0\}$ we have 
$W'(x)-\lambda x\leq W'(y)-\lambda y$. 
Thus we have the one-sided Lipschitz estimate (OSL) for $W'$
	\begin{equation}\label{WOSL}
\forall\, x>y\in \RR\setminus\{0\}, \qquad  W'(x)-W'(y)\leq \lambda (x-y).
	\end{equation}
Letting $y\to 0^{\pm}$ we deduce that for all $x>0$, 
$W'(x)-\lambda x \leq W'(0^+)$ and $W'(x)-\lambda x \leq W'(0^-)$. 
Thus we also have the one-sided estimate
	\begin{equation}  \label{WOSL1}
W'(x) \leq \lambda x, \quad \mbox{ for all } x>0.  
	\end{equation}

In the following, we will assume that the potential $W$ is Lipschitz
in order to avoid possible linear growth at infinity of the velocity field~:
\begin{itemize}
\item[{\bf (A4)}] There exists a nonnegative constant $C_0$ such that $|W'(x)|\leq C_0$ for all $x\in \RR^*$.
\end{itemize}


The one-dimensional framework also allows to simplify several proofs in Theorem \ref{GradFlow}.
Indeed any probability measure $\mu$ on the real line $\RR$
can be described in term of its cumulative distribution function
$M(x)=\mu((-\infty,x))$ which is a right-continuous
and nondecreasing function with $M(-\infty)=0$ and $M(+\infty)=1$.
Then we can define the generalized inverse $F_\mu$ of $M$ (or monotone rearrangement of $\mu$)
by $F_\mu(z)=M^{-1}(z):=\inf\{x\in \RR / M(x)>z\}$, it is a right-continuous
and nondecreasing function as well, defined on $[0,1]$.
We have for every nonnegative Borel map $\xi$,
$$
\int_\RR \xi(x) \rho(dx) = \int_0^1 \xi(F_\mu(z))\,dz.
$$
In particular, $\mu\in \calP_2(\RR)$ if and only if $F_\mu\in L^2(0,1)$.
Moreover, in the one-dimensional setting, there exists a unique optimal map
realizing the minimum in \eqref{defWp}. 
More precisely, if $\mu$ and $\nu$ belongs to $\calP_2(\RR)$, with monotone rearrangement
$F_\mu$ and $F_\nu$, then $\Gamma_0(\mu,\nu)=\{(F_\mu,F_\nu)_\# {\LL}_{(0,1)}\}$ where 
${\LL}_{(0,1)}$ is the restriction of the Lebesgue measure on $(0,1)$.
Then we have the explicit expression of the Wasserstein distance (see \cite{rachev,Villani2}) 
	\begin{equation}\label{dWF-1}
d_W(\mu,\nu)^2 = \int_0^1 |F_\mu(z)-F_\nu(z)|^2\,dz,
	\end{equation}
and the map $\mu \mapsto F_\mu$ is an isometry between $\calP_2(\RR)$ and the convex subset
of (essentially) nondecreasing functions of $L^2(0,1)$.

In this framework, we can then rewrite the JKO scheme \eqref{JKO} 
in the proof above. Let us denote by 
$M_k^{\tau}$ the cumulative distribution of the measure $\rho_k^{\tau}$ and
by $V_k^{\tau}:=F_k^{\tau}$ its generalized inverse.
Then, in term of generalized inverses, \eqref{JKO} rewrites
$$
V_{k+1}^\tau \in \underset{\{V\in L^2(0,1), \pa_xV \geq 0\}}{\arg\min} \left\{ 
\widetilde{\calW}(V) + \frac{1}{2\tau} \|V-V_k^{\tau}\|_{L^2(0,1)}^2 \right\},
$$
where
$$
\widetilde{\calW}(V) = \frac 12 \int_0^1\int_0^1 W(V(y)-V(z))\,dydz.
$$
Such an approach using the generalized inverse has been used in \cite{BCC}
for the one dimensional Patlak-Keller-Segel equation.

\Section{Duality solutions}\label{dualsol}
We turn now to the alternative notion of weak solution we wish to investigate. 
It is based on the so-called duality solutions which were introduced for 
linear advection equations with discontinuous coefficients in \cite{bj1}. 
Compared with the gradient flow approach, this strategy allows
a more straightforward PDE formulation.
In particular from the numerical viewpoint, classical finite volume approach strongly relying on this formulation is proposed in \cite{numerix}.
The main drawback is that presently duality solutions in any space dimension 
are only available for pure transport equations
(see \cite{bujama}). Since we have to deal here with conservative balance laws,
we have to restrict ourselves to one space dimension. 
First we give a brief account of the theory developed in \cite{bj1}, 
summarizing the main theorems we shall use, 
next we define duality solutions for \eqref{EqInter}.

\subsection{Linear conservation equations}\label{duality}
We consider here conservation equations in the form
	\beq\label{eq.conserve}
\partial_t\rho + \pa_x\big(b(t,x)\rho\big) = 0, \qquad (t,x)\in(0,T)\times\RR,
	\eeq
where $b$ is a given bounded Borel function. Since no regularity is assumed for $b$, 
solutions to \eqref{eq.conserve} eventually are measures in space. A convenient tool to handle this
is the notion of duality solutions, which are defined as weak solutions, the test functions 
being Lipschitz solutions to the backward linear transport equation
	\begin{eqnarray}
& & \ds \partial_t p + b(t,x) \partial_x p = 0, \label{(3)}\\
& & \ds p(T,.) = p^T \in {\rm Lip}(\RR).
	\end{eqnarray}
In fact, a formal computation shows that 
$\frac{d}{dt}\left(\int_{\RR}p(t,x)\rho(t,dx)\right) = 0$,
which defines the duality solutions for suitable $p$'s. 

It is quite classical that a sufficient condition to ensure existence for \eqref{(3)} is that 
the velocity field to be compressive, in the following sense:
	\begin{definition}
We say that the function $b$ satisfies the one-sided Lipschitz (OSL) condition if
	\begin{equation}\label{OSLC}
\partial_x b(t,.)\leq \beta(t)\qquad\mbox{for $\beta\in L^1(0,T)$ in the distributional sense}.
	\eeq
	\end{definition}
However, to have uniqueness, we need to restrict ourselves to {\it reversible} solutions of (\ref{(3)}): let $\mathcal L$ 
denote the set of Lipschitz continuous solutions to (\ref{(3)}), and define
the set ${\mathcal E}$ of {\it exceptional solutions} by
	$$
{\mathcal E} = \Big\{p\in{\mathcal L} \mbox{ such that } p^T \equiv 0 \Big\}.
	$$ 
The possible loss of uniqueness corresponds to the case where ${\mathcal E}$ is not reduced to zero.
	\begin{definition}
We say that $p\in{\mathcal L}$ is a {\bf reversible solution} to (\ref{(3)}) 
if $p$ is locally constant on the set 
	$$
{\mathcal V}_e=\Big\{(t,x)\in [0,T] \times \RR;\ \exists\ p_e\in{\mathcal E},\ p_e(t,x)\not=0\Big\}.
	$$
	\end{definition}
We refer to \cite{bj1} for complete statements of the characterization and properties of reversible solutions.
Then, we can state the definition of duality solutions.
	\begin{definition}
We say that 
$\rho\in \smes := C([0,T];{\cal M}_b(\RR)-\sigma({\cal M}_b,C_0))$
is a {\bf duality solution} to (\ref{eq.conserve}) 
if for any $0<\tau\le T$, and any {\bf reversible} solution $p$ to (\ref{(3)})
with compact support in $x$,
the function $\displaystyle t\mapsto\int_{\RR}p(t,x)\rho(t,dx)$ is constant on
$[0,\tau]$.
	\label{defdual}\end{definition}

We summarize now some properties of duality solutions
that we shall need in the following.

	\begin{theorem}(Bouchut, James \cite{bj1})\label{ExistDuality}
\begin{enumerate}
\item Given $\rho^\circ \in {\cal M}_b(\RR)$, under the assumptions
(\ref{OSLC}), there exists a unique $\rho \in \smes$,
duality solution to (\ref{eq.conserve}), such that $\rho(0,.)=\rho^\circ$. \\
Moreover, if $\rho^\circ$ is nonnegative, then $\rho(t,\cdot)$ is nonnegative
for a.e. $t\geq 0$. And we have the mass conservation 
	$$
|\rho(t,\cdot)|(\RR) = |\rho^\circ|(\RR), \quad \mbox{ for a.e. } t\in ]0,T[.
	$$
\item Backward flow and push-forward: the duality solution satisfies
	\beq\label{flow}
\forall\, t\in [0,T], \forall\, \phi\in C_0(\RR),\quad
\int_\RR \phi(x)\rho(t,dx) = \int_\RR \phi(X(t,0,x)) \rho^0(dx),
	\eeq
where the {\bf backward flow} $X$ is defined as the unique reversible
solution to
	$$
\pa_tX + b(t,x) \pa_xX = 0 \quad \mbox{ in } ]0,s[\times\RR, \qquad
X(s,s,x)=x.
	$$
\item For any duality solution $\rho$, we define the {\bf generalized flux}
  corresponding to $\rho$ by 
$b\Dpetit \rho = -\pa_t u$, where $u=\int^x \rho\,dx$.

There exists a bounded Borel function $\widehat b$, called {\bf
universal representative} of $b$, such that $\widehat b = b$
almost everywhere, $b\Dpetit \rho =\widehat b\rho$ and for any duality solution $\rho$,
	$$
\partial_t \rho + \partial_x(\widehat b\rho) = 0 \qquad \hbox{in the distributional sense.}
	$$
\item Let $(b_n)$ be a bounded sequence in
$L^\infty(]0,T[\times\RR)$, such that
$b_n\rightharpoonup b$ in $L^\infty(]0,T[\times\RR)-w\star$. Assume
$\partial_x b_n\le \beta_n(t)$, where $(\beta_n)$ is bounded in $L^1(]0,T[)$,
$\partial_x b\le\beta\in L^1(]0,T[)$.
Consider a sequence $(\rho_n)\in\smes$ of duality solutions to
	$$
\partial_t\rho_n+\partial_x(b_n\rho_n)=0\quad\hbox{in}\quad]0,T[\times\RR,
	$$
such that $\rho_n(0,.)$ is bounded in ${\cal M}_{b}(\RR)$, and
$\rho_n(0,.)\rightharpoonup\rho^\circ\in{\cal M}_{b}(\RR)$.

\noindent Then $\rho_n\rightharpoonup \rho$ in $\smes$, where $\rho\in\smes$ is the
duality solution to
	$$
\partial_t\rho+\partial_x(b\rho)=0\quad\hbox{in}\quad]0,T[\times\RR,\qquad
\rho(0,.)=\rho^\circ.
	$$
Moreover, $\widehat b_n\rho_n\rightharpoonup \widehat b\rho$ weakly in ${\cal M}_{b}(]0,T[\times\RR)$.
\end{enumerate}
	\end{theorem}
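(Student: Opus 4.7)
Since Theorem \ref{ExistDuality} collects four distinct results essentially borrowed from \cite{bj1}, my plan is to build the theory from the backward flow outward, treating the four items in the order $(2)\Rightarrow(1)\Rightarrow(3)\Rightarrow(4)$ even though they are stated in the opposite order. The cornerstone is the construction of a backward flow $X(s,t,x)$ under the OSL hypothesis \fref{OSLC}: for $s$ fixed, I would regularize $b$ by convolution to obtain smooth $b_\varepsilon$ whose classical backward characteristics $X_\varepsilon$ satisfy $\partial_x X_\varepsilon(s,t,x)\ge \exp\bigl(-\int_t^s \beta_\varepsilon(\tau)d\tau\bigr)>0$, hence are uniformly Lipschitz and monotone in $x$. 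Passing to the limit (Arzel\`a–Ascoli in $x$ at the price of a null set in $t$) yields a Lipschitz, nondecreasing map $X(s,t,\cdot)$ which one identifies with the reversible solution; the reversibility/locally-constant characterization in the exceptional set is what pins down $X$ uniquely.

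Once $X$ is in hand I would \emph{define} $\rho(t)$ by duality as $\int_\RR \phi(x)\rho(t,dx)=\int_\RR \phi(X(t,0,x))\rho^\circ(dx)$ for $\phi\in C_0(\RR)$, which both proves existence in item (1) and gives the push-forward representation in item (2). Weak continuity in $t$, the nonnegativity, and the mass conservation (take $\phi\equiv 1$ localised by cutoff) then follow immediately from the properties of $X$. For uniqueness, I would take two duality solutions $\rho_1,\rho_2$ with the same datum and fix $\phi\in C_c(\RR)$; solving \fref{(3)} backward from $p(t,\cdot)=\phi$ yields a reversible test function, and Definition \ref{defdual} forces $\int\phi(\rho_1(t)-\rho_2(t))=\int p(0,\cdot)\,(\rho^\circ-\rho^\circ)=0$, whence $\rho_1(t)=\rho_2(t)$.

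For item (3) the generalized flux $b\Dpetit\rho:=-\partial_tu$ with $u(t,x)=\int_{-\infty}^x\rho(t,dy)$ is unambiguously a distribution, and the work is to exhibit a single Borel function $\widehat b$, depending on $b$ alone, such that $b\Dpetit\rho=\widehat b\rho$ for \emph{every} duality solution. I would follow the Bouchut–James construction: use the forward-in-time solutions of \fref{eq.conserve} issued from a family of point masses to detect the values of $\widehat b$ on each characteristic cluster, then verify that the resulting Borel selection coincides with $b$ a.e. (thanks to Lebesgue differentiation away from the concentration set) and gives the correct flux on arbitrary $\rho$. This is the step I expect to be the main obstacle, because defining $\widehat b$ simultaneously requires reconciling the point-values prescribed by concentrations with the almost-everywhere values of $b$; the OSL condition is what prevents the two requirements from conflicting.

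Finally, for item (4), the OSL bound $\partial_xb_n\le\alpha_n$ transfers to a uniform Lipschitz bound on the backward flows $X_n(s,t,\cdot)$. Using that $b_n\rightharpoonup b$ weak-$\ast$ and a compactness argument (Helly or Arzel\`a–Ascoli in $x$, plus equicontinuity in $t$ coming from the $L^\infty$ bound on $b_n$), a subsequence of $X_n$ converges locally uniformly to the backward flow $X$ associated to $b$; by the push-forward formula of item (2) this gives $\rho_n\rightharpoonup\rho$ in $\smes$. Passing to the limit in $\partial_t\rho_n+\partial_x(\widehat b_n\rho_n)=0$ and identifying the limit of $\widehat b_n\rho_n$ with $\widehat b\rho$ — which is a consequence of the consistency of $\widehat{(\cdot)}$ established in item (3) — completes the stability claim. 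By the uniqueness in item (1), the whole sequence (not only a subsequence) converges.
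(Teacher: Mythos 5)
The paper itself contains no proof of Theorem \ref{ExistDuality}: it is quoted as a summary of results from Bouchut--James \cite{bj1}, so the only meaningful benchmark is that reference, whose architecture (reversible solutions of the backward equation \fref{(3)}, the associated flow, push-forward of the initial measure, universal representative, weak stability) your outline does reproduce in broad strokes and in a sensible order.

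Two concrete problems remain. First, your cornerstone estimate is stated in the wrong direction: for the regularized flow one has $\frac{d}{ds}\partial_x X_\varepsilon(s,t,x)=\partial_x b_\varepsilon(s,X_\varepsilon)\,\partial_x X_\varepsilon$, so the OSL bound $\partial_x b_\varepsilon\le\beta_\varepsilon(t)$ yields the \emph{upper} bound $\partial_x X_\varepsilon(s,t,x)\le\exp\bigl(\int_t^s\beta_\varepsilon(\tau)\,d\tau\bigr)$ and no uniform lower bound; the inequality $\partial_x X_\varepsilon\ge\exp\bigl(-\int_t^s\beta_\varepsilon\bigr)>0$ you assert is false in general (mollify $b(x)=-\mathrm{sign}(x)$: there $\beta\equiv 0$ yet $\partial_x X_\varepsilon\to 0$ as characteristics collapse), and a uniform positive lower bound would rule out exactly the concentration phenomena the theory is built to handle. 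The conclusions you draw (uniform Lipschitz bounds, monotonicity, compactness) do follow, but from the upper bound, so this is a repairable slip rather than a dead end. Second, and more substantively, several steps silently invoke the full theory of reversible solutions of \cite{bj1} that your sketch never constructs: to verify that the push-forward measure is a duality solution in the sense of Definition \ref{defdual} you need the representation of an arbitrary reversible $p$ along the flow, $p(t,X(t,0,x))=p(0,x)$; your uniqueness argument needs existence of a reversible solution for every Lipschitz, compactly supported final datum; and identifying the limit of $X_\varepsilon$ as \emph{the} reversible solution needs the uniqueness/characterization of reversible solutions via the exceptional set. Together with the construction of the universal representative $\widehat b$ in item 3 --- which you rightly single out as the main obstacle but only describe programmatically --- these are precisely the hard parts of \cite{bj1}; as written, your text is a faithful roadmap to that paper rather than a proof.
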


The set of duality solutions is clearly a vector space, but it has
to be noted that a duality solution is not {\it a priori} defined as 
a solution in the sense of distributions. 
However, assuming that the coefficient $b$ is piecewise continuous,
we have the following equivalence result:
	\begin{theorem}\label{dual2distrib}
Let us assume that in addition to the OSL condition \eqref{OSLC}, 
$b$ is piecewise continuous on $]0,T[\times\RR$ where the 
set of discontinuity is locally finite.
Then there exists a function $\bchapo$ which coincides with $b$
on the set of continuity of $b$.

With this $\bchapo$, $\rho\in \smes$ is a duality solution to 
\eqref{eq.conserve} if and only if $\pa_t\rho+\pa_x(\bchapo\rho)=0$
in ${\mathcal D}'(\RR)$. Then the generalized flux 
$b\Dpetit \rho = \bchapo \rho$. 
In particular, $\bchapo$ is a universal representative of $b$.
	\end{theorem}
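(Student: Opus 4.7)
The plan is in three steps: (i) produce the function $\bchapo$ using the universal representative from Theorem~\ref{ExistDuality}(3); (ii) obtain the direct implication for free from that theorem; (iii) prove the converse by testing the distributional equation against reversible solutions of \eqref{(3)}.

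For step (i), Theorem~\ref{ExistDuality}(3) yields a Borel function $\tilde b$, equal to $b$ a.e., such that $b\Dpetit\rho=\tilde b\,\rho$ for every duality solution $\rho$. Since the discontinuity set $D$ of $b$ is locally finite in $(0,T)\times\RR$, it is negligible for Lebesgue measure, and $b$ is continuous on $\Omega:=\bigl((0,T)\times\RR\bigr)\setminus D$. Because $\tilde b=b$ a.e., I can modify $\tilde b$ on the negligible set $D$ alone so that the result, called $\bchapo$, coincides with $b$ on $\Omega$; since $\bchapo=\tilde b$ a.e., $\bchapo$ is still a universal representative, so $b\Dpetit\rho=\bchapo\,\rho$ and $\partial_t\rho+\partial_x(\bchapo\rho)=0$ distributionally for every duality solution $\rho$. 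This handles step (ii) at the same time.

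For step (iii), fix $\rho\in\smes$ satisfying $\partial_t\rho+\partial_x(\bchapo\rho)=0$ in $\mathcal{D}'$ and let $p$ be a reversible solution of \eqref{(3)} with compact spatial support on $[0,\tau]$. By a standard mollification argument the Lipschitz function $p$ may be inserted as a test function, giving
\[
\int_\RR p(\tau,x)\,\rho(\tau,dx)-\int_\RR p(0,x)\,\rho(0,dx)=\int_0^\tau\!\!\int_\RR\bigl(\partial_t p+\bchapo\,\partial_x p\bigr)\,\rho(t,dx)\,dt.
\]
On $\Omega$ one has $\bchapo=b$, so the integrand equals $\partial_t p+b\,\partial_x p$, which vanishes pointwise because $p$ is a Lipschitz solution of the backward transport equation \eqref{(3)} in the region where $b$ is continuous. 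On the negligible set $D$, the OSL condition \eqref{OSLC} forces $D$ to be compressive and hence contained in the exceptional set $\mathcal{V}_e$; by the very definition of reversible solutions, $p$ is locally constant on $\mathcal{V}_e$, so $\partial_t p=\partial_x p=0$ on $D$ as well. Therefore the right-hand side vanishes, $t\mapsto\int p(t,\cdot)\rho(t,\cdot)$ is constant on $[0,\tau]$, and Definition~\ref{defdual} is satisfied.

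The main obstacle is in step (iii): one must carefully verify, using the characterization of reversible solutions in \cite{bj1} together with the compressive structure imposed by \eqref{OSLC}, that every reversible $p$ is locally constant on the full discontinuity set $D$, so that even though $\rho$ may charge $D$ the contribution of $D$ to the test identity disappears. The legitimacy of the Lipschitz test function $p$ in the distributional formulation follows from $\bchapo\rho\in\calM_b$ and the time regularity $\rho\in\smes$, and is routine. Once these points are settled, the equivalence, the identity $b\Dpetit\rho=\bchapo\rho$, and the universal-representative property are immediate consequences of Step~(i).
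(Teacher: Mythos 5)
There is a genuine gap, and it sits at the heart of the matter. In your step (i) you argue that, since the universal representative $\widehat b$ of Theorem \ref{ExistDuality}(3) equals $b$ Lebesgue-a.e., you may modify it on the null discontinuity set $D$ and still have a universal representative. This is false: the defining property $b\Dpetit\rho=\widehat b\rho$ is an identity between \emph{measures}, and duality solutions $\rho$ are typically singular with respect to Lebesgue measure and may charge exactly the null set on which you alter $\widehat b$. In the very application of this theorem in the paper, $\rho_n=\sum_i m_i\delta_{x_i(t)}$ is supported precisely on the discontinuity curves of $b=\hat a_{\rho_n}$, and the value of $\bchapo$ there is forced (the averaged value \eqref{achaporhon}, not an arbitrary one); choosing it differently destroys both the distributional equation and the duality property. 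Identifying these values on $D$ is the whole content of the theorem, so it cannot be obtained by an a.e. modification. (Note also that modifying $\widehat b$ on $D$ alone does not even make it coincide with $b$ on the continuity set, since $\widehat b=b$ only a.e. and the exceptional null set need not be contained in $D$.)

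Step (iii) has related problems. The quantity $\int\!\!\int(\pa_t p+\bchapo\,\pa_x p)\,\rho(t,dx)\,dt$ is not well defined: $p$ is only Lipschitz, its derivatives exist Lebesgue-a.e., and the equation $\pa_t p+b\,\pa_x p=0$ holds only Lebesgue-a.e., which gives no information $\rho(t,dx)\,dt$-a.e. when $\rho$ is singular; the ``standard mollification'' you invoke to insert $p$ as a test function is precisely the non-routine core of the duality theory in \cite{bj1}. Moreover, the claim that the OSL condition forces $D\subset{\mathcal V}_e$ is unjustified and false in general: a piecewise continuous $b$ with zero spatial jump across $D$, or with purely time-like discontinuities (e.g.\ $b=\mathbf{1}_{t\ge 1/2}$), satisfies \eqref{OSLC} while ${\mathcal E}=\{0\}$ and ${\mathcal V}_e=\emptyset$, so reversible solutions need not be locally constant on $D$. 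For comparison, the paper does not reprove this statement at all: it invokes \cite[Theorem 4.3.7]{bj1}, where the equivalence follows from uniqueness of the Cauchy problem for both duality and distributional solutions, the key ingredient being the identification of the universal representative on the discontinuity set through the structure of reversible solutions and the backward flow --- exactly the point your argument bypasses.
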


This result comes from the uniqueness of solutions to the Cauchy 
problem for both kinds of solutions, see \cite[Theorem 4.3.7]{bj1}.

\subsection{Duality solutions for aggregation}\label{dual:agreg}
Equipped with this notion of solutions, we can now define duality solutions for the aggregation equation.
The idea was introduced in \cite{BJpg} in the context of pressureless gases. It was next applied to chemotaxis in \cite{jamesnv},
and we shall actually follow these steps.
	\begin{definition}\label{defexist}
We say that $\rho\in \smes$
is a duality solution to \eqref{EqInter} if there exists 
$\widehat{a}_\rho\in L^\infty((0,T)\times\RR)$ and $\beta\in L^1_{loc}(0,T)$ 
satisfying $\pa_x\widehat{a}_\rho\le \beta$ in $\calD'((0,T)\times\RR)$, 
such that for all $0<t_1<t_2<T$,
	\begin{equation}\label{rhodis}
\pa_t\rho + \pa_x(\widehat{a}_\rho\rho) = 0
	\end{equation}
in the sense of duality on $(t_1,t_2)$,
and $\widehat{a}_\rho=a(W'*\rho)$ a.e.
We emphasize that it means that the final datum for \eqref{(3)} should be at $t_2$ instead of $T$.
	\end{definition}
This allows at first to give a meaning to the notion of distributional solution, but it turns out that uniqueness is 
a crucial issue. For that, a key point is a precise definition of the product $\widehat{a}_\rho\rho$, as we shall see in
more details in Section \ref{vel:flux} below.

We now state the main theorems about duality solutions for the aggregation equation \eqref{EqInter}. 
Existence of such solutions in a measure space has been 
obtained in \cite{jamesnv} in the particular case $W(x)=\frac 12 e^{-|x|}$ and
a similar result is presented in \cite{CRAS} when $W(x)=-|x|/2$ which 
appears in many applications in physics or biology.
We extend here these results for a general potential satisfying
assumptions {\bf (A0)--(A4)}. However to do so, we have, as in 
\cite{Carrillo}, to restrict ourselves to the linear case,
that is $a=\mbox{id}$.
	\begin{theorem}[Duality solutions, linear case]\label{th:duality}
Let $W$ satisfy assumptions {\bf (A0)--(A4)} and $a=\mbox{id}$.
Assume that $\rho^{ini}\in \calP_1(\RR)$.
Then for any $T>0$, there exists a unique $\rho\in\smes$ such that $\rho(0)=\rho^{ini}$, 
$\rho(t)\in \calP_1(\RR)$ for any $t\in (0,T)$, and $\rho$ is a duality solution
to equation \eqref{EqInter} with universal representative $\widehat{a}_\rho$ in \eqref{rhodis} defined by
	\begin{equation}\label{achapo}
\widehat{a}_\rho(t,x):= \pa^0W*\rho(t,x) = \int_{x\neq y} W'(x-y)\rho(t,dy).
	\end{equation}
Moreover we have $\rho = X_\# \rho^{ini}$ where $X$ is the 
backward flow corresponding to $\widehat{a}_\rho$.
	\end{theorem}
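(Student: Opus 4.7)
The plan is to construct the solution by regularizing the singular potential, following the strategy of \cite{jamesnv}. Let $(\eta_n)$ be a standard symmetric mollifier and set $W_n:=W*\eta_n$. Since $W-\lambda|x|^2/2$ is concave and convolution with a nonnegative kernel preserves concavity (up to an additive quadratic constant), $W_n$ is $\lambda$-concave with $W_n''\le\lambda$ pointwise. Moreover $W_n$ is even, so $W_n'$ is odd, $W_n'(0)=0$, and $W_n'\to W'$ locally uniformly on $\RR\setminus\{0\}$; the bound \eqref{boundW} transfers uniformly in $n$. For each $n$, the regularized problem $\partial_t\rho_n+\partial_x(a_n\rho_n)=0$ with $a_n:=W_n'*\rho_n$ and $\rho_n(0)=\rho^{ini}$ admits a unique classical solution $\rho_n(t)=\Phi_n(t,\cdot)_\#\rho^{ini}$ through the Lagrangian formulation
\[
\dot\Phi_n(t,x)=\int_\RR W_n'\bigl(\Phi_n(t,x)-\Phi_n(t,y)\bigr)\,\rho^{ini}(dy), \qquad \Phi_n(0,x)=x,
\]
which is solvable by Cauchy-Lipschitz.

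The key uniform estimates are mass conservation, a first-moment bound $m_n(t):=\int|x|\rho_n(t,dx)\le C(T)$ obtained from \eqref{boundW} and Gronwall, and the one-sided Lipschitz condition $\partial_x a_n=W_n''*\rho_n\le\lambda$ (using that $\rho_n(t)$ is a probability measure). The moment bound yields tightness of $(\rho_n(t))$, and the $L^\infty$ bound on $a_n$ combined with the PDE provides equicontinuity of $t\mapsto\rho_n(t)$ tested against Lipschitz functions. A standard Arzel\`a-Ascoli argument then extracts a subsequence converging to some $\rho\in\smes$ with $\rho(t)\in\calP_1(\RR)$. Applying item 4 of Theorem \ref{ExistDuality}, with the uniform OSL bound on $a_n$, shows that $\rho$ is a duality solution of $\partial_t\rho+\partial_x(b\rho)=0$ where $b$ is the weak-$\star$ limit of $a_n$, with generalized flux $\widehat b\rho$.

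The crucial step is to identify $\widehat b$ with $\hat{a}_\rho$ defined in \eqref{achapo}. Fix $(t,x)$ such that $\rho(t)$ has no atom at $x$, and split $a_n(t,x)=\int_{|x-y|<\delta}+\int_{|x-y|\ge\delta}$. The oddness of $W_n'$ together with $W_n'(0)=0$ and \eqref{boundW} give smallness of the first integral as $\delta\to 0$, uniformly in $n$; on the second, the weak convergence $\rho_n(t)\rightharpoonup\rho(t)$ combined with the local uniform convergence $W_n'\to W'$ on $\RR\setminus\{0\}$ allows passage to the limit. Hence $a_n(t,x)\to\hat{a}_\rho(t,x)$ for every such $(t,x)$; since the atoms of $\rho(t)$ form an at most countable set, this convergence holds Lebesgue a.e. and therefore weakly-$\star$ in $L^\infty$, so we may take $\widehat b=\hat{a}_\rho$. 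The pushforward representation $\rho=X_\#\rho^{ini}$ then follows from item 2 of Theorem \ref{ExistDuality}.

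For uniqueness, let $\rho_1,\rho_2$ be two duality solutions with the same initial datum, and denote $V_i(t,\cdot):=F_i^{-1}(t,\cdot)$ the generalized inverse of the cumulative of $\rho_i$. Along characteristics $\partial_t V_i(t,z)=\hat{a}_{\rho_i}(t,V_i(t,z))=\int_0^1 W'(V_i(t,z)-V_i(t,z'))\,dz'$. Differentiating $\|V_1-V_2\|_{L^2(0,1)}^2$ and symmetrizing the resulting double integral using the oddness of $W'$ produces an integrand of the form $(u_1-u_2)(W'(u_1)-W'(u_2))$, where $u_i:=V_i(z)-V_i(z')$; the OSL bound \eqref{WOSL} controls this by $\lambda(u_1-u_2)^2$, leading to $\frac{d}{dt} d_W^2(\rho_1,\rho_2)\le 4\lambda\, d_W^2(\rho_1,\rho_2)$ and uniqueness by Gronwall, first for $\rho^{ini}\in\calP_2$ and then for $\rho^{ini}\in\calP_1$ by standard truncation together with stability (Theorem \ref{ExistDuality}). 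The main obstacle throughout is the identification of the limit flux in the third step, requiring careful control near the singularity of $W'$ at the origin, which is handled through the symmetry built into the regularization.
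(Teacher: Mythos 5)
Your existence argument takes a genuinely different route from the paper: you regularize the potential ($W_n=W*\eta_n$) and solve the mollified nonlinear equation by a Lagrangian/Cauchy--Lipschitz construction, whereas the paper approximates the \emph{initial datum} by finite combinations of Dirac masses, solves the sticky-particle ODE system \eqref{EDOxi} (Filippov flow, thanks to the OSL bound \eqref{WOSL}), and only then passes to the limit using the stability of duality solutions. Both routes rest on the same three pillars (uniform OSL constant $\lambda$, uniform first-moment bound via \eqref{boundW} and Gronwall, item 4 of Theorem \ref{ExistDuality}), and your use of a \emph{symmetric} mollifier, so that $W_n'$ is odd with $W_n'(0)=0$, is exactly what encodes the choice of velocity \eqref{achapo} at atoms (zero self-interaction), playing the role that the explicit formula \eqref{achaporhon} plays for the paper's aggregates. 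One small imprecision: at a point where $\rho(t)$ has no atom, the smallness of $\int_{|x-y|<\delta}W_n'(x-y)\rho_n(t,dy)$ is not a consequence of oddness (the bound $|W_n'|\le C_0+C_1|z|$ does not vanish near $0$); it follows from $\limsup_n\rho_n(t,\overline{B_\delta(x)})\le\rho(t,\overline{B_\delta(x)})\to 0$. Oddness is instead what kills the near-diagonal contribution to the \emph{flux} when mass concentrates, which is the point you rightly flag. Your identification of the limit velocity is otherwise at the same level of care as the paper's.

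The genuine gap is in uniqueness. Your contraction is run in $L^2(0,1)$ for the generalized inverses, i.e.\ in $d_{W}=d_{W2}$; but the theorem asserts uniqueness in the class $\rho(t)\in\calP_1(\RR)$, where $V_1-V_2$ need not belong to $L^2(0,1)$ and $d_{W2}$ may be infinite. The proposed repair, ``first for $\rho^{ini}\in\calP_2$ and then for $\rho^{ini}\in\calP_1$ by standard truncation together with stability,'' does not work: the two solutions to be compared are given with the same $\calP_1$ datum, and the stability statement of Theorem \ref{ExistDuality} applies to the \emph{linear} equation with prescribed converging coefficients, while here the coefficients $\hat a_{\rho_1},\hat a_{\rho_2}$ depend on the unknown solutions; uniqueness does not transfer from a dense subclass of data by such an approximation, short of already having a continuous-dependence estimate in a metric finite on $\calP_1$ --- which is precisely what must be proved. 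The fix is exactly the paper's Proposition \ref{propuniq}: multiply the equation for $\partial_t(F_1^{-1}-F_2^{-1})$ by $\mathrm{sign}(F_1^{-1}-F_2^{-1})$ instead of by the difference, symmetrize using the oddness of $W'$, and apply \eqref{WOSL}; this yields $\frac{d}{dt}\|F_1^{-1}-F_2^{-1}\|_{L^1(0,1)}\le\lambda\|F_1^{-1}-F_2^{-1}\|_{L^1(0,1)}$, i.e.\ a $d_{W1}$ contraction valid in $\calP_1(\RR)$, and uniqueness follows by Gronwall. With that substitution your proof is complete; your $d_{W2}$ estimate remains useful (as in the paper) for the comparison with gradient flow solutions in $\calP_2(\RR)$.
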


We turn now to the case $a\neq \mbox{id}$. In order to be in the attractive case, 
we assume that the following
	\begin{assumption}\label{assump1}
The potential $W$ is Lipschitz and pointy, i.e. satisfies {\bf (A0)--(A4)}.
The function $a$ is non-decreasing, with
	\beq\label{hyp_a}
a\in C^1(\RR),\quad 0\leq a' \leq \alpha, \quad \alpha>0.
	\eeq
	\end{assumption}
In this context, existence and uniqueness of duality solutions
have been proved for the case of an interaction potential 
$W=\frac 12 e^{-|x|}$ in \cite{jamesnv}.
We extend here the techniques developed in this latter work to 
more general potentials $W$. However we are not able to prove such results in the whole generality of 
assumptions {\bf (A0)--(A4)} and need more regularity on the 
interaction potential, as follows
	\begin{assumption}\label{assump}
We assume that $W\in C^1(\RR\setminus\{0\})$ and that in the distributional sense
	\beq\label{hyp}
W''= -\delta_0 + w, \quad w\in  Lip\cap L^\infty(\RR),
	\eeq
where $\delta_0$ is the Dirac measure in $0$.
	\end{assumption}

This allows a definition of the flux in \eqref{EqInter} which generalizes the one in \cite{jamesnv}. Indeed we can formally
take the convolution of \eqref{hyp} by $\rho$, then multiply by $a(W'*\rho)$. Denoting by 
 $A$ the antiderivative of $a$ such that $A(0)=0$  and using the chain rule we obtain formally
	\begin{equation}\label{ChainRule}
-\pa_x(A(W'*\rho))=-a(W'*\rho)W''*\rho = a(W'*\rho)(\rho-w*\rho).
	\end{equation}
Thus a natural formulation for the flux $J$ is given by
	\beq\label{DefFluxJ}
J:= -\pa_x\big(A(W'*\rho)\big)+a(W'*\rho)w*\rho.
	\eeq
The product $a(W'*\rho)w*\rho$ is well defined since $w*\rho$ is Lipschitz.
Then $J$ is defined in the sense of measures.

	\begin{theorem}[duality solutions, nonlinear case]\label{dual_anonid}
Let be given $\rho^{ini}\in\calP_1(\RR)$.
Under Assumptions \ref{assump1} and \ref{assump} on the potential $W$ and
the nonlinear function $a$, for all $T> 0$ there exists
a unique duality solution $\rho$ of \eqref{EqInter} in the sense 
of Definition \ref{defexist} $\rho(t)\in \calP_1(\RR)$
for $t\in (0,T)$ and which satisfies in the distributional sense
	\beq\label{eqrhodis}
\pa_t \rho + \pa_x J = 0,
	\eeq
where $J$ is defined by \eqref{DefFluxJ}. 
	\end{theorem}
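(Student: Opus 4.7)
The plan is to adapt the strategy developed in \cite{jamesnv} for the exponential potential $W=\tfrac12 e^{-|x|}$ to the class of potentials covered by Assumption \ref{assump}. The structural identity $W''=-\delta_0+w$ with $w\in\mathrm{Lip}\cap L^\infty$ is used twice: to produce a uniform one-sided Lipschitz bound on the velocity field $\hat a_\rho=a(W'*\rho)$, and to give an unambiguous meaning to the nonlinear flux $\hat a_\rho\rho$ through the identification with $J$ in \eqref{DefFluxJ}. Accordingly, the scheme is: (i) build approximate solutions by regularization; (ii) pass to the limit using the stability part of Theorem \ref{ExistDuality}; (iii) identify the distributional flux; (iv) prove uniqueness.

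For existence, I would regularize the initial data by mollification, $\rho^{ini}_\eps=\rho^{ini}*\eta_\eps$, and replace $W'$ by a smooth approximation $W'_\eps$ so as to solve a classical nonlinear transport equation with Lipschitz velocity $b_\eps=a(W'_\eps*\rho_\eps)$. Mass conservation, nonnegativity of $\rho_\eps$, and the bound $|W'(x)|\le C_0+C_1|x|$ from \eqref{boundW} give uniform control of the first moment of $\rho_\eps$. The crucial a priori estimate is a uniform OSL bound: formally differentiating and using Assumption \ref{assump} one gets
$$\partial_x \hat a_{\rho_\eps}=a'(W'*\rho_\eps)\bigl(-\rho_\eps+w*\rho_\eps\bigr)\le \alpha\,\|w\|_\infty\,|\rho_\eps|(\RR)=\alpha\,\|w\|_\infty,$$
where the nonpositivity of the $-\rho_\eps$ contribution follows from $a'\ge 0$ and $\rho_\eps\ge 0$. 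Combined with standard tightness arguments (from the first moment bound), this yields compactness of $(\rho_\eps)$ in $\smes$, and continuity of $W'$ on $\RR\setminus\{0\}$ gives $W'*\rho_\eps\to W'*\rho$ in a strong enough sense to apply Theorem \ref{ExistDuality}(4) to the linearized equation $\partial_t\rho_\eps+\partial_x(\hat a_{\rho_\eps}\rho_\eps)=0$. The limit $\rho$ is then a duality solution with $\hat a_\rho=a(W'*\rho)$ satisfying the OSL property required by Definition \ref{defexist}.

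To identify the generalized flux with $J$ in \eqref{DefFluxJ}, I would exploit that at the regularized level the chain rule \eqref{ChainRule} holds classically, giving
$$\hat a_{\rho_\eps}\rho_\eps=-\partial_x A(W'*\rho_\eps)+a(W'*\rho_\eps)\,w*\rho_\eps.$$
The right-hand side involves only the distributional derivative of a bounded function and the product of bounded continuous functions, hence it is stable under the convergence already established; passing to the limit yields \eqref{eqrhodis} with $J$ given by \eqref{DefFluxJ} and simultaneously identifies the universal representative in the sense of Theorem \ref{ExistDuality}(3).

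For uniqueness, let $\rho_1,\rho_2$ be two duality solutions with the same datum and denote by $F_i(t,x)=\rho_i(t,(-\infty,x))$ their cumulative distributions. Integrating \eqref{eqrhodis} in $x$ produces a scalar equation of the form $\partial_t F_i+A(W'*\rho_i)=\int_{-\infty}^x a(W'*\rho_i)\,w*\rho_i\,dy$ in which, after noting that $W'*\rho_i$ is an explicit nondecreasing transform of $F_i$, the principal flux depends on $F_i$ in a pointwise-monotone fashion. Following the strategy of \cite{jamesnv,CRAS} I would then show that $F_i$ is the unique entropy solution of a scalar conservation law with a nonlocal lower-order perturbation (handled by a Gronwall loop on the $L^1$ distance between $F_1$ and $F_2$), so that $\rho_1=\rho_2$ follows by differentiation. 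The main obstacle is to rigorously transfer the formal chain rule \eqref{ChainRule} to the measure-valued setting, since $W'*\rho$ carries jumps located precisely at the atoms of $\rho$ where both the product $a(W'*\rho)\rho$ and the derivative $\partial_x A(W'*\rho)$ are a priori ill-defined; once this consistency is proved via the regularization of step (iii), the rest of the uniqueness argument reduces to classical scalar conservation law techniques.
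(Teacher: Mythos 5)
Your existence step is a reasonable variant of what is actually done, but it is not the paper's route: the paper approximates $\rho^{ini}$ by finite convex combinations of Dirac masses, derives explicitly the sticky particle dynamics $m_i x_i'=-[A(W'*\rho_n)]_{x_i}$, uses the Vol'pert chain rule to write the flux as $\achapo_n\rho_n$ with $\achapo_n=a(W'*\rho_n)$ a.e., invokes Theorem \ref{dual2distrib} (piecewise continuous velocity) to get duality solutions at the discrete level, and only then passes to the limit by the stability Theorem \ref{ExistDuality}(4); your OSL computation is identical to Lemma \ref{aOSL}. Your mollification route can probably be made to work, but you would still owe a well-posedness argument for the regularized nonlinear problem (a fixed point, since the velocity depends on the unknown) and a justification that $W'_\eps*\rho_\eps\to W'*\rho$ strongly enough to identify both the limit velocity and the limit flux, bearing in mind that atoms form only in the limit; these points are glossed over but are not the main difficulty.

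The genuine gap is in the uniqueness step. First, the equation you write for $F_i$ is not what integration of \eqref{eqrhodis} gives: one obtains $\pa_t F_i-\pa_x A(W'*\rho_i)+a(W'*\rho_i)\,w*\rho_i=0$, not $\pa_t F_i+A(W'*\rho_i)=\int_{-\infty}^x a(W'*\rho_i)\,w*\rho_i\,dy$. Second, and more seriously, for $w\neq 0$ the quantity $W'*\rho_i=-F_i+\widetilde w*\rho_i$ is \emph{not} a pointwise monotone transform of $F_i$: the nonlocal term $\widetilde w*\rho_i$ sits inside the nonlinear flux $A(\cdot)$, so your $F$-equation is not a scalar conservation law with a pointwise flux plus lower-order perturbation, and Kru\v{z}kov's doubling of variables does not apply to it as stated. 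The paper resolves this by taking $U_i=W'*\rho_i$ itself as the unknown (Proposition \ref{entropy}): $U_i$ solves $\pa_t U_i+\pa_x A(U_i)=$ nonlocal lower-order terms, and the entropy inequality is available \emph{only under} the condition $\pa_x U_i\le w*\rho_i$, which is equivalent to $\rho_i\ge 0$; this must be verified and used explicitly, since without it uniqueness genuinely fails (see the counterexample recalled from \cite{jamesnv}). Third, a single Gronwall loop on $\|F_1-F_2\|_{L^1}$ does not close: the nonlocal terms produce $\|w*(\rho_1-\rho_2)\|_{L^1}$, which cannot be controlled by $\|F_1-F_2\|_{L^1}$ when $w$ is only Lipschitz and bounded (no integrability of $w$ or $w'$ is assumed). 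The paper's argument convolves \eqref{eqrhodis} with $w$ to obtain a second differential inequality for $\int|w*\rho_1-w*\rho_2|$ and runs a coupled two-quantity Gronwall estimate on $\int|U_1-U_2|$ and $\int|w*\rho_1-w*\rho_2|$; some device of this kind is needed, and its absence makes your uniqueness argument incomplete even after the structural corrections above.
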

Theorems \ref{th:duality} and \ref{dual_anonid} are proved respectively in Sections \ref{aId} and \ref{anonid}, but
before diving into the detailed proofs, we comment the main steps, which are common to both cases.
\begin{itemize}
\item {\bf existence} of duality solutions is obtained by approximation. 
First we obtain the dynamics of aggregates (that is combinations of Dirac masses), 
then we proceed using the stability of duality solutions
\item {\bf uniqueness} is obtained by a contraction argument in $\calP_1(\RR)$.
No uniqueness is expected in a general space of measures. The argument is repeated in $\calP_2(\RR)$ so that gradient flow and duality solutions can be compared.
In the nonlinear case, the contraction argument 
relies on an entropy inequality.
\end{itemize}

\subsection{Velocities and fluxes}\label{vel:flux}
When concentrations occur in conservation equations, leading to measure-valued solutions, a key point to 
obtain existence and uniqueness in the sense of distributions is the definition of the flux and the corresponding velocity.
This was already pointed out in \cite{BJpg}, where duality solutions are defined for pressureless gases,
and partially managed through conditions on the initial data.
A more satisfactory solution came out in \cite{jamesnv}, since uniqueness was completely handled by a careful definition of the 
flux of the equation, or in other terms, the product $\widehat{a}_\rho\rho$. An analogous situation arising in plasma physics
is considered in \cite{CRAS}, around duality solutions as well. In a similar context, other definitions of the product can
be found, see \cite{NPS} in the one-dimensional setting, and \cite{PoupaudDef} for a generalization in two space dimensions,
where defect measures are used.

We explain in more details this point in the present context, in order to give a meaning to both duality and gradient flow 
solutions in the sense of distributions. As a rule, the product of $a(W'*\rho(t))$ by $\rho(t)$ is not well-defined
when $\rho(t)\in \calM_b(\RR)$. 
First we compute $W'*\rho$. We write $\rho=-\partial_xu$, so that $u\in BV(\RR)$. For such a function, we denote
by $S_u$ the set of $x\in \RR$ where $u$ does not admit an 
approximate limit, $|S_u|=0$, and by $J_u\subset S_u$ the set of approximate jump points 
(see \cite[Proposition 3.69]{ambrosioBV}).
We use the decomposition $\rho=-\partial^a_xu+\rho^c+\rho^j$, where
$\partial^a_xu \ll {\mathcal L}$ is the regular part of the derivative, $\rho^j=\sum_{y\in J_u}\zeta_y\delta_y$ the jump part,
and $\rho^c$ the so-called Cantor part. The diffuse part of the derivative is defined as 
$\rho^d=-\partial^a_xu+\rho^c$.
For $x\notin J_u$, we easily obtain 
	$$
W'*\rho(x) = W'*\rho^d(x)+\sum_{y\in J_u}\zeta_yW'(x-y),
	$$
while if $x\in J_u$, the function is not defined. Indeed, letting $z\to x$, first with $z<x$, then with $z>x$, we obtain
	\begin{equation}\label{convol}
W'*\rho(x^{\pm}) = -W'*\partial^a_xu(x)+\sum_{y\in J_u,y\ne x}\zeta_yW'(x-y) + \zeta_xW'(0^{\pm}).
	\end{equation}
Removing the indetermination amounts to define a velocity for a single Dirac mass located in $x$ or equivalently for the
center of mass of the density. Obviously, formula \eqref{achapo} sets this value to $0$, hence a single Dirac mass is stationary,
and the product by the measure $\rho$ is meaningful. Therefore in the linear case we can consider the flux 
	$$
J(t,x) := \widehat{a}_\rho(t,x)\,\rho(t,x).
	$$
Recall that this value is obtained by computing the element of minimal norm in the subgradient of the energy $\mathcal W$ 
corresponding to $W$.

On the other  hand, in the nonlinear case, with $W''=-\delta_0+w$, the natural quantity to be defined is the flux $J$,
by formula \eqref{DefFluxJ}. 
To define the corresponding velocity, and give rigorous meaning to \eqref{ChainRule}, we use the Vol'pert calculus for
 BV functions \cite{volpert} (see also \cite[Remark 3.98]{ambrosioBV}): 
for a BV function $u$, the fonction $\achapo_u$ defining the chain rule 
$\pa_x(A(u))=\achapo_u \pa_x u$ is constructed by
	\begin{equation}\label{eq:volpert1}
\achapo_u(x) = \int_0^1 a(u_1(x)+ (1-t)u_2(x))\,dt, 
	\end{equation}
where 
	\begin{equation}\label{eq:volpert2}
(u_1,u_2)=\left\{\begin{array}{ll}
\ds (u,u) & \qquad \mbox{ if } x\in \RR\setminus S_u, \\[2mm]
\ds (u^+,u^-) & \qquad \mbox{ if } x\in J_u, \\[2mm]
\ds \mbox{ arbitrary } & \qquad \mbox{elsewhere.}
\end{array}\right.
	\end{equation}
Now we apply that to $u=W'*\rho$, and obtain, using the antiderivative $A$ of $a$, 
	\begin{equation}\label{aVolpert}
\achapo_u(x) = \left\{\begin{array}{ll}
\ds a(W'*\rho(x)) & \mbox{ if }x\in \RR\setminus S_u, \\[2mm]
\ds \frac{A(W'*\rho(x^+))-A(W'*\rho(x^-))}{W'*\rho(x^+)-W'*\rho(x^-)}& \mbox{ if } x\in J_u, \\[3mm]
\ds \mbox{ arbitrary } & \mbox{elsewhere.}
	\end{array}\right.
	\end{equation}
The connection with the linear case follows since then $A(v)=v^2/2$, hence
	$$
\dfrac{A(W'*\rho(x^+))-A(W'*\rho(x^-))}{W'*\rho(x^+)-W'*\rho(x^-)} = \dfrac{W'*\rho(x^+)+W'*\rho(x^-)}2.
	$$
Therefore the undetermined term in \eqref{convol} is replaced by $(W'(0^+)+W'(0^-))/2$, which vanishes
since $W$ is even, and we recover \eqref{achapo}.

\section{The linear case}\label{aId}
By linear case we mean the case where $a=\mbox{\rm id}$ in \eqref{EqInter}. Together with assumptions
of Definition \ref{pointyPot}, this is exactly the context of \cite{Carrillo}. At first we prove Theorem \ref{th:duality},
 obtaining existence of duality solutions in Subsection \ref{existence}, and uniqueness in Subsection \ref{uniqueness}. 
Next, in Subsection \ref{link} we establish that they are equivalent to gradient flow solutions, 
thus answering the questions raised by Remark 2.16 of \cite{Carrillo}. More precisely, we prove the following theorem.

\begin{theorem}\label{th:link}
Let $a=\mbox{id}$.
Let us assume that $W$ satisfies assumptions {\bf (A0)--(A4)} and 
that $\rho^{ini}\in \calP_2(\RR)$.
\begin{enumerate}
\item[(i)] Let $\rho$ be the duality solution as in Theorem \ref{th:duality}.
Then for all $t>0$, $\rho(t)\in \calP_2(\RR)$,
$\rho\in AC_{loc}^2((0,+\infty);\calP_2(\RR))$ and $\rho$ is the 
gradient flow solution as in Theorem \ref{GradFlow}.
\item[(ii)] If $\rho$ is the gradient flow solution of Theorem \ref{GradFlow}, 
then it is a duality solution as in Theorem \ref{th:duality}.
\end{enumerate}
\end{theorem}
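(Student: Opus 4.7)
The plan is to prove (i) directly by verifying that the duality solution of Theorem \ref{th:duality} fulfills every clause of Definition \ref{defgradflow}, and then to deduce (ii) from the uniqueness statement of Theorem \ref{GradFlow} combined with (i).

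For (i), let $\rho$ be the duality solution with associated backward flow $X$, so that $\rho(t)=X(t,0,\cdot)_\#\rho^{ini}$. By \eqref{achapo} the universal representative is $\hat a_\rho=\pa^0W*\rho$, and by identity \eqref{sousdifW} this is exactly $-\pa^0\calW(\rho)$. Hence the candidate velocity $v=\hat a_\rho$ lies in $-\pa\calW(\rho)$, and as the minimal-norm element of the subdifferential it belongs to $Tan_{\rho(t)}\calP_2(\RR)$ by the projection properties recorded in \cite{Ambrosio}. The distributional continuity equation is Theorem \ref{ExistDuality}(3). It only remains to produce the $\calP_2$ regularity and the $AC^2_{loc}$ property.

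To propagate the second moment, I would use the pointwise bound \eqref{boundW} to obtain $|\hat a_\rho(t,x)|\leq C_0+C_1(|x|+M_1(\rho(t)))$ with $M_1(\mu):=\int|y|\,\mu(dy)$. Integrating the characteristic ODE $\dot X=\hat a_\rho(t,X)$, applying Gronwall pathwise, and pushing forward by $\rho^{ini}\in\calP_2$ yields a locally bounded second moment $M_2(\rho(t))$. For the absolute continuity, for $0<s<t$ the coupling $(X(s,0,\cdot),X(t,0,\cdot))_\#\rho^{ini}\in\Gamma(\rho(s),\rho(t))$ gives
\[ d_W(\rho(s),\rho(t))^2\leq\int_\RR\Bigl|\int_s^t\hat a_\rho(\tau,X(\tau,0,x))\,d\tau\Bigr|^2\rho^{ini}(dx), \]
and Cauchy--Schwarz combined with the change of variables $X(\tau,0,\cdot)_\#\rho^{ini}=\rho(\tau)$ bounds the right-hand side by $(t-s)\int_s^t\|\hat a_\rho(\tau,\cdot)\|_{L^2(\rho(\tau))}^2 d\tau$. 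The pointwise growth bound on $\hat a_\rho$ together with the moment estimate shows that $\|\hat a_\rho(\tau,\cdot)\|_{L^2(\rho(\tau))}\in L^2_{loc}(0,\infty)$, giving simultaneously $\rho\in AC^2_{loc}$ and the norm integrability required by Definition \ref{defgradflow}. Uniqueness in Theorem \ref{GradFlow} then identifies $\rho$ with the gradient flow solution of that theorem.

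For (ii), given the gradient flow solution $\rho$ from Theorem \ref{GradFlow}, the inclusion $\calP_2\subset\calP_1$ allows us to invoke Theorem \ref{th:duality} to produce a duality solution $\tilde\rho$ with the same initial data $\rho^{ini}$. Part (i) says that $\tilde\rho$ is a gradient flow solution; uniqueness in Theorem \ref{GradFlow} then forces $\rho=\tilde\rho$, so $\rho$ coincides with the duality solution. The main obstacle is the transition from the duality framework (phrased in $\calM_b$ with merely first-moment control) to the Wasserstein setting: propagating the second moment and producing the sharp $L^2$-in-time estimate on $\|v(t)\|_{L^2(\rho(t))}$ required by Definition \ref{defgradflow} relies precisely on the Gronwall/push-forward chain sketched above. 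A secondary but essential check, discussed in Section \ref{vel:flux}, is that the pointwise prescription \eqref{achapo} matches the Vol'pert-type chain rule used in the gradient-flow construction; this holds because $W$ is even, so the ambiguous jump term $\zeta_x(W'(0^+)+W'(0^-))/2$ vanishes.
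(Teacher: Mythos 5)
Your proposal is correct and reaches the same two key facts as the paper (propagation of the second moment and the $AC^2_{loc}$ property, then identification of the velocity $\hat a_\rho=\partial^0W*\rho=-\partial^0\calW(\rho)$ via \eqref{sousdifW}), but several sub-arguments are genuinely different. For the moment bound and the absolute continuity, the paper works at the level of the atomic approximation: it estimates $j_2(t)=\sum_i m_i x_i^2(t)$ along the ODE system \eqref{EDOxi} using \eqref{boundW} and Gronwall, passes to the limit $n\to+\infty$ by stability of duality solutions, and then invokes Theorem 8.3.1 of \cite{Ambrosio} (continuity equation with velocity in $L^1((0,T);L^2(\rho))$ implies $AC^2_{loc}$). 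You instead argue directly on the limit solution, using the push-forward representation \eqref{flow} to run a pathwise Gronwall for the second moment and building an explicit coupling $(X(s,0,\cdot),X(t,0,\cdot))_\#\rho^{ini}$ to bound $d_W(\rho(s),\rho(t))$ by hand; this is a clean shortcut, but note that it uses that the backward/duality flow satisfies the characteristic ODE $\dot X=\hat a_\rho(t,X)$ (or at least the differential inclusion with the one-sided limits of the velocity), a fact which is not among the statements quoted in Theorem \ref{ExistDuality} and must be borrowed from \cite{bj1} or the Filippov theory \cite{Filippov}; alternatively you could simply follow the paper and get $AC^2_{loc}$ from \cite[Theorem 8.3.1]{Ambrosio} once the $L^2(\rho)$ bound on $\hat a_\rho$ is known. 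For point (ii) the routes also differ: the paper observes that the gradient flow solution is a distributional solution of \eqref{rhodis}--\eqref{achapo} and concludes by the $\calP_1$-contraction uniqueness (Proposition \ref{propuniq} behind Theorem \ref{th:duality}), whereas you construct the duality solution with the same data, apply your part (i), and invoke the $d_W$-contraction uniqueness of Theorem \ref{GradFlow}; both are valid, yours avoids reusing the distributional uniqueness but makes (ii) depend on (i). Your explicit verification that $\hat a_\rho$ lies in the tangent space and in $-\partial\calW(\rho)$ is also more detailed than the paper, which delegates this to \cite{Carrillo} and \cite[Sections 8.3 and 8.4]{Ambrosio}.
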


\subsection{Existence of duality solutions}\label{existence}
The first step is to verify that the velocity $\achapo_\rho$ defined by \eqref{achapo}
 satisfies the OSL condition \eqref{OSLC}.
	\begin{lemma}\label{achapoOSL}
Let $\rho(t)\in \calM_b(\RR)$ be nonnegative for all $t\geq 0$.
Then under assumptions {\bf (A0) -- (A4)} the function
$(t,x)\mapsto \widehat{a}_\rho(t,x)$ defined in \eqref{achapo}
satisfies the one-sided Lipschitz estimate
	$$
\widehat{a}_\rho(t,x)-\widehat{a}_\rho(t,y) \leq \lambda (x-y)|\rho|(\RR),
\quad\mbox{ for all }\ x>y,\ t\geq 0
	$$
	\end{lemma}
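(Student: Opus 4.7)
My plan is to verify the one-sided Lipschitz bound by writing out the difference $\hat{a}_\rho(t,x)-\hat{a}_\rho(t,y)$ explicitly and applying the two estimates \eqref{WOSL} and \eqref{WOSL1} already extracted from the assumptions on $W$. The only delicate point is handling possible atoms of $\rho$ at the points $x$ and $y$, because the convolution \eqref{achapo} is defined only by excluding the singularity of $W'$ at the origin.

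Fix $x>y$ and $t\geq 0$. The plan is to isolate the atomic parts at $x$ and $y$: decompose
\ben
\rho(t)=\rho_0+\alpha_x\delta_x+\alpha_y\delta_y,
\een
where $\alpha_x,\alpha_y\geq 0$ are the masses of $\rho(t)$ at $x$ and $y$, and $\rho_0$ is a nonnegative measure having no atom at either $x$ or $y$. Plugging this into the definition \eqref{achapo} and using that the Dirac at $x$ (resp.\ at $y$) is excluded in the integral for $\hat{a}_\rho(t,x)$ (resp.\ $\hat{a}_\rho(t,y)$), I get
\bean
\hat{a}_\rho(t,x) &=& \int W'(x-z)\,\rho_0(dz)+\alpha_y W'(x-y),\\
\hat{a}_\rho(t,y) &=& \int W'(y-z)\,\rho_0(dz)+\alpha_x W'(y-x).
\eean
Since $W$ is even by {\bf (A0)}, $W'$ is odd on $\RR\setminus\{0\}$, so $W'(y-x)=-W'(x-y)$, whence
\ben
\hat{a}_\rho(t,x)-\hat{a}_\rho(t,y)=\int\bigl(W'(x-z)-W'(y-z)\bigr)\rho_0(dz)+(\alpha_x+\alpha_y)W'(x-y).
\een

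For the integral term, $\rho_0$-almost every $z$ satisfies $z\neq x$ and $z\neq y$, so both $x-z$ and $y-z$ lie in $\RR\setminus\{0\}$ with $x-z>y-z$; the OSL estimate \eqref{WOSL} then gives $W'(x-z)-W'(y-z)\leq\lambda(x-y)$, and integration against $\rho_0$ yields the bound $\lambda(x-y)|\rho_0|(\RR)$. For the Dirac contribution, since $x-y>0$, the one-sided estimate \eqref{WOSL1} applies and gives $W'(x-y)\leq\lambda(x-y)$, hence $(\alpha_x+\alpha_y)W'(x-y)\leq\lambda(x-y)(\alpha_x+\alpha_y)$. Adding the two contributions and using $|\rho_0|(\RR)+\alpha_x+\alpha_y=|\rho(t)|(\RR)$ (nonnegativity) concludes the proof.

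The only real subtlety, and the step I would write most carefully, is the bookkeeping of the excluded singular points in the integral defining $\hat{a}_\rho$: one must be sure that crossing from $\hat{a}_\rho(t,x)$ to $\hat{a}_\rho(t,y)$ does not leave out the Dirac mass sitting at $y$ in the first integral nor the one at $x$ in the second, and that these two missing terms combine via the oddness of $W'$ into the single positive contribution $(\alpha_x+\alpha_y)W'(x-y)$, which the one-sided bound \eqref{WOSL1}—and not the stronger OSL \eqref{WOSL}, which is not valid across $0$—is exactly what one needs.
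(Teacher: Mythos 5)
Your proof is correct and follows essentially the same route as the paper's: the paper writes $\hat{a}_\rho(x)-\hat{a}_\rho(y)$ as an integral over $\{z\neq x,\ z\neq y\}$ plus a term $W'(x-y)\int_{z\in\{x\}\cup\{y\}}\rho(dz)$ obtained via the oddness of $W'$, then applies \eqref{WOSL} to the former and \eqref{WOSL1} to the latter together with nonnegativity of $\rho$, which is exactly your atomic decomposition $\rho=\rho_0+\alpha_x\delta_x+\alpha_y\delta_y$ with different bookkeeping. No gap.
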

\begin{proof}
By definition \eqref{achapo}, we have
	$$
\widehat{a}_\rho(x)-\widehat{a}_\rho(y) = \int_{z\neq x, z\neq y} 
(W'(x-z)-W'(y-z))\rho(dz) + W'(x-y) \int_{z\in\{x\}\cup\{y\}} \rho(dz),
	$$
where we use the oddness of $W'$ {\bf (A0)} in the last term.
Let us assume that $x>y$, from \eqref{WOSL}, we deduce that
$W'(x-z)-W'(y-z)\leq \lambda (x-y)$ and with \eqref{WOSL1}, we deduce
$W'(x-y)\leq \lambda (x-y)$. Thus, using the nonnegativity of $\rho$, 
we deduce the one-sided Lipschitz (OSL) estimate for $\widehat{a}_\rho$.
\end{proof}

\noindent
{\bf Proof of the existence result in Theorem \ref{th:duality}.} 
This proof is split in several steps.

$\bullet$ {\bf Aggregates}.

Consider first $\rho^{ini}_n=\sum_{i=1}^n m_i \delta_{x_i^0}$ 
where $x_1^0<x_2^0<\dots<x_n^0$ and the $m_i$-s are nonnegative.
We assume that $\sum_{i=1}^n m_i=1$ and that $\sum_{i=1}^n m_i |x_i^0| <+\infty$ such that
$\rho^{ini}_n\in \calP_1(\RR)$.
We look forward a solution $\rho_n(t,x)=\sum_{i=1}^n m_i \delta_{x_i(t)}$
in the distributional sense of the equation
\begin{equation}
  \label{eq:rhodistrib}
\pa_t\rho + \pa_x (\widehat{a}_\rho\rho) = 0,  
\end{equation}
where $\widehat{a}_\rho$ is defined in \eqref{achapo}.
Let $u_n:=\int^x \rho_n=\sum_{i=1}^n m_i H(x-x_i(t))$ where $H$ is the Heaviside 
function. Then we have
	$$
-\pa_t u_n = \widehat{a}_{\rho_n} \rho_n = \sum_{i=1}^n m_i \sum_{j\neq i} m_j W'(x_i-x_j) \delta_{x_i}.
	$$
In fact,
	\begin{equation}\label{achaporhon}
\widehat{a}_{\rho_n}(x) = \left\{\begin{array}{ll}
\ds \sum_{j\neq i} m_jW'(x_i-x_j) & \ds \qquad \mbox{ if } x=x_i,\ i=1,\ldots,n  \\[2mm]
\ds \sum_{j=1}^n m_j W'(x-x_j) & \ds \qquad \mbox{ otherwise}.
\end{array}\right.
	\end{equation}
From Lemma \ref{achapoOSL} and expression \eqref{achaporhon}, we deduce that 
$\widehat{a}_{\rho_n}$ satisfies the OSL condition.
Hence, there exists a unique Filippov flow \cite{Filippov} 
which is global in time thanks to assumption {\bf (A4)}.
Then the sequence $(x_i)_{i=1,\ldots,n}$ satisfies the ODE system
	\begin{equation}\label{EDOxi}
x'_i(t) = \sum_{j\neq i} m_j W'(x_i-x_j),\qquad x_i(0)=x_i^0,\qquad i=1,\ldots,n_\ell,
	\end{equation}
where $n_\ell\leq n$ is the number of distinct particles, i.e.
$n_\ell = \#\{i\in \{1,\ldots,N\},  x_i\neq x_j, \forall\,j \}$.
Equation \eqref{EDOxi} should be understood as looking for absolute continuous 
solutions to the integral problem
\beq\label{EDOint}
x_i(t) = x_i^0 + \sum_{j\neq i} \int_0^t m_j W'(x_i(s)-x_j(s))\,ds.
\eeq
Notice that \eqref{convol} rewrites as
	$$
W'*\rho_n(x_i^\pm) = \sum_{j\neq i} m_j W'(x_i-x_j) + m_i W'(0^\pm).
	$$

We define the dynamics of aggregates as follows.
\begin{itemize}
\item the $x_i$-s are solutions of system \eqref{EDOxi} (where the right-hand side is zero if $n_\ell =1$), when they are all distinct;
\item at collisions, we define a sticky dynamics: if $x_i=x_j$ 
at time $T_\ell$ when for instance $i<j$,
then the two aggregates collapse in a single one and we redefine
system \eqref{EDOxi} by changing the number $n_\ell$ to $n_\ell-1$,
replacing the mass $m_i$ by $m_i+m_j$ and deleting the point $x_j$.
We denote by $0:=T_0<T_1<\ldots T_k<\infty$ the times of collapse, where $k<n$.
\end{itemize}

This choice for the dynamics is clearly mass-conservative.
Moreover, with this choice at the collisional times, we still have
$x_i<x_j$ when $i<j$. 


Thus we can define $\rho_n:=\sum_{i=1}^{n_\ell} m_i \delta_{x_i}$. 
It is then straightforward by construction that $\rho$ is a solution 
in the distributional sense of \eqref{eq:rhodistrib}--\eqref{achapo}.

$\bullet$ {\bf Finite first order moment}.

By construction, we clearly have that $\rho\geq 0$.
Let us denote by $j_1^n(t):=\sum_{i=1}^{n_\ell} m_i |x_i(t)|$ the first order moment.
We have $j_1(0)<+\infty$.
Using \eqref{EDOxi}, we compute
$$
\frac{d}{dt} j_1^n(t) \leq \sum_{i=1}^{n_\ell}  m_i \sum_{j\neq i} m_j |W'(x_i-x_j)|
\leq C, 
$$
where we use {\bf (A4)}
and $\sum_i m_i=|\rho^{ini}_n|(\RR)=1$ for the last inequality.
We deduce that there exists a nonnegative constant $C$ such that for all $t\in [0,T]$,
\beq\label{boundj1}
j_1^n(t) \leq C T + j_1^n(0).
\eeq

$\bullet$ {\bf Duality solutions}.

From Lemma \ref{achapoOSL} and expression \eqref{achaporhon}, we deduce that 
$\widehat{a}_{\rho_n}$ satisfies the OSL condition and is piecewise continuous
outside the set of discontinuities $\{x_i\}_{i=1,\ldots,n_\ell}$. Theorem \ref{dual2distrib}
implies then that $\rho_n$ is a duality solution for all $n\in \NN^*$.

$\bullet$ {\bf Passing to the limit $n\to +\infty$}.

Using the fact that $\rho_n(t)\in \calP_1(\RR)$ for all $t\geq 0$ and from 
{\bf (A4)},
we deduce that $\widehat{a}_{\rho_n}$ is bounded in $L^\infty((0,T)\times\RR)$
uniformly with respect to $n$.
Thus, from point 4 of Theorem \ref{ExistDuality}, we can extract 
a subsequence $\widehat{a}_n$ converging in $L^\infty((0,T)\times\RR)-w\star$ towards 
$\widehat{a}$ and the corresponding sequence of duality solutions $(\rho_n)_n$ converges
in $\smes$ towards $\rho$ which is a duality solution to $\pa_t \rho + \pa_x (\widehat{a} \rho) = 0$.
Moreover, since $\rho_n\rightharpoonup \rho$ in $\smes$, the formula \eqref{achapo} defining $\widehat{a}_\rho$
implies $\widehat{a}_{\rho_n} \to \widehat{a}_\rho$ a.e.
(a proof of such result is postponed in Lemma \ref{lemA} in Appendix).
Thus $\widehat{a} = \widehat{a}_\rho$ a.e., the flux $J_n(t,x) \rightharpoonup J(t,x) := \widehat{a}_\rho \rho$ in $\calM_b(]0,T[\times \RR)-w$ and
the conservation equation \eqref{rhodis} holds both in the duality and distributional sense.
Finally passing to the limit $n\to +\infty$ in \eqref{boundj1}, we deduce that $\rho$ has a 
bounded first order moment.

\subsection{Uniqueness}\label{uniqueness}
Let $\rho$ be a nonnegative duality solution which satisfies \eqref{rhodis} in the distributional sense.
As above, we denote by $F$ the cumulative distribution function of $\rho$
and by $F^{-1}$ its generalized inverse. We have then by integration of 
\eqref{rhodis}
$$
\pa_t F + \widehat{a}_\rho \pa_x F = 0,
$$
so that the generalized inverse is a solution to
\begin{equation}
  \label{eq:F-1}
\pa_t F^{-1}(t,z) = \widehat{a}_\rho(t,F^{-1}(z)).  
\end{equation}
Moreover thanks to a change of variables in \eqref{achapo}, 
	$$
\widehat{a}_\rho(t,F^{-1}(z))=\int_{y\neq z} W'(F^{-1}(z)-F^{-1}(y))\,dy.
	$$

	\begin{proposition}\label{propuniq}
Assume $\rho_1(t,\cdot),\rho_2(t,\cdot)\in\calP_1(\RR)$ satisfy \eqref{rhodis} in the 
sense of distributions, with $\widehat{a}_{\rho_i}$ given by \eqref{achapo}, and initial data
$\rho_1^{ini}$ and $\rho_2^{ini}$. Then we have, for all $t>0$
	$$
d_{W1}(\rho_1(t,\cdot),\rho_2(t,\cdot)) \leq e^{2\lambda t} d_{W1}(\rho_1^{ini},\rho_2^{ini}).
	$$
	\end{proposition}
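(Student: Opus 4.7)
The plan is to lift the claimed $W_1$-contraction to an $L^1(0,1)$-contraction on the quantile functions $u_i(t,z):=F_i^{-1}(t,z)$ associated with $\rho_i(t,\cdot)$. The one-dimensional formula for the Wasserstein-$1$ distance (the $W_1$-analogue of \eqref{dWF-1}) reads
\[
d_{W1}(\rho_1(t),\rho_2(t))=\int_0^1 |u_1-u_2|(t,z)\,dz,
\]
and the evolution equation \eqref{eq:F-1}, together with the change of variables displayed immediately after it, shows that each $u_i$ solves
\[
\partial_t u_i(t,z)=\int_0^1 W'\bigl(u_i(t,z)-u_i(t,y)\bigr)\,dy,
\]
with the convention $W'(0):=0$ (consistent with the oddness of $W'$ and with \eqref{WOSL1}). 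The problem thus reduces to proving an exponential bound on $\|u_1(t)-u_2(t)\|_{L^1(0,1)}$.

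After regularizing $|\cdot|$ by a smooth convex $|\cdot|_\eps$ to justify differentiating under the integral (legitimate thanks to the uniform $L^\infty$-bound on $\partial_t u_i$ coming from \eqref{boundW}), I would obtain
\[
\frac{d}{dt}\|u_1-u_2\|_{L^1(0,1)}
=\iint_{(0,1)^2}\mathrm{sgn}(u_1-u_2)(z)\bigl[W'(a)-W'(b)\bigr]\,dy\,dz,
\]
with $a:=u_1(z)-u_1(y)$ and $b:=u_2(z)-u_2(y)$. The key step exploits the oddness of $W'$ given by {\bf (A0)}: swapping $y$ and $z$ in the double integral and averaging produces the symmetrized form
\[
\tfrac12\iint_{(0,1)^2}\bigl[\mathrm{sgn}(u_1-u_2)(z)-\mathrm{sgn}(u_1-u_2)(y)\bigr]\bigl[W'(a)-W'(b)\bigr]\,dy\,dz.
\]

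On the subset where the signs of $u_1-u_2$ at $z$ and at $y$ coincide the first bracket vanishes; on the opposite-sign subset a direct computation gives $|a-b|=|u_1-u_2|(z)+|u_1-u_2|(y)$ with $\mathrm{sgn}(a-b)$ equal to the sign of the first bracket, so the OSL estimate \eqref{WOSL} for $W'$ on $\RR\setminus\{0\}$ (extended to zero by \eqref{WOSL1}) bounds the product of the two brackets from above by a constant multiple of $\lambda\bigl(|u_1-u_2|(z)+|u_1-u_2|(y)\bigr)$. Integrating over $(y,z)$ and invoking Gronwall's inequality closes the estimate. The main technical obstacle will be to make rigorous the above formal steps: defining $\mathrm{sgn}(0):=0$, handling the plateaus of the $u_i$ (which correspond to atoms of $\rho_i$ and may form positive-measure subsets of $(0,1)^2$) via the extended OSL, and justifying the time-differentiation of the $L^1$ norm through the smooth approximation $|\cdot|_\eps$ combined with the uniform bound on $\partial_t u_i$. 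Finally, the same calculation with $L^1$ replaced by $L^2$ reproduces, via \eqref{dWF-1}, the $W_2$-contraction already established in \cite{Carrillo} (recalled in Theorem \ref{GradFlow}), so the present proposition can be viewed as the $W_1$-parallel of that bound.
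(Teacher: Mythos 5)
Your proposal follows essentially the same route as the paper's proof: pass to the generalized inverses, identify $d_{W1}$ with the $L^1(0,1)$ distance between quantile functions, differentiate in time, symmetrize in $(y,z)$ using the oddness of $W'$, invoke the one-sided estimate \eqref{WOSL}, and conclude with Gronwall. Your extra remarks (the convention $W'(0):=0$, regularization of $|\cdot|$, the plateaus of $u_i$ corresponding to atoms) address points the paper passes over silently, and are welcome.

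The one substantive difference is the constant, and it is worth being aware of it. Writing $g:=u_1-u_2$, your (correct) symmetrized identity carries the factor $\tfrac12\bigl[\mathrm{sign}\,g(z)-\mathrm{sign}\,g(y)\bigr]$, which has modulus $1$ on the set where $g(z)$ and $g(y)$ have opposite signs; there your bound is $\lambda\bigl(|g(z)|+|g(y)|\bigr)=\lambda|g(z)-g(y)|$, and integrating this over the opposite-sign set can be as large as $2\lambda\|g\|_{L^1}$ (take $g$ large on a small set and slightly negative on the rest). So Gronwall closes your estimate only with rate $e^{2\lambda t}$, i.e.\ "a constant multiple of $\lambda$" really is $2\lambda$, not $\lambda$. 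The paper reaches the stated $e^{\lambda t}$ through \eqref{uniq1}, where the averaged factor is replaced by $\tfrac12\,\mathrm{sign}\bigl(g(z)-g(y)\bigr)$, of modulus $\tfrac12$ everywhere; your bracket form shows that on the active (opposite-sign) set the true factor has modulus $1$, so \eqref{uniq1} holds only up to this factor of two, and the exponent $\lambda$ is not recovered by this symmetrization argument alone. (Indeed, for the admissible potential $W(x)=\tfrac{\lambda}{2}x^2$ one can compute the quantile dynamics explicitly and see that $d_{W1}$ can exceed $e^{\lambda t}d_{W1}(0)$, so the sharper constant should not be expected from this route.) None of this affects what the proposition is used for: any finite Gronwall rate gives $d_{W1}(\rho_1(t),\rho_2(t))=0$ when the initial data coincide, hence uniqueness, and in the $L^2$ setting your same computation does reproduce the clean $e^{\lambda t}$ bound of \eqref{dWF-1}--Theorem \ref{GradFlow} because the sign function is replaced by $g(z)-g(y)$ itself. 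So: same method as the paper, honestly carried out, but state your conclusion with $e^{2\lambda t}$ (or restrict to the uniqueness consequence) rather than the constant claimed in the statement.
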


\begin{proof}
Let $F_i^{-1}$ denote the generalized inverse of $\rho_i$, $i=1,2$. From \eqref{eq:F-1}, we have
	$$\begin{array}{ll}
\ds \pa_t(F_1^{-1} - F_2^{-1}) 											
& \ds= \widehat{a}_{\rho_1}(t,F^{-1}_1(z))-\widehat{a}_{\rho_2}(t,F^{-1}_2(z))  \\[2mm]
&\ds = \int_{y\neq z} \big(W'(F^{-1}_1(z)-F^{-1}_1(y))
-W'(F^{-1}_2(z)-F^{-1}_2(y))\,dy.
	\end{array}$$
Multiplying the latter equation by
$sign(F_1^{-1}(z)-F_2^{-1}(z))$ 
and integrating, we get
$$
\begin{array}{l}
\ds \frac{d}{dt} \int_0^1 \big|F_1^{-1}(z)-F_2^{-1}(z)\big|\,dz = \\[3mm]
\ds \intdouble_{\{y\neq z\}}\big(W'(F^{-1}_1(z)-F^{-1}_1(y))
-W'(F^{-1}_2(z)-F^{-1}_2(y))\big)
\mbox{sign}\big(F_1^{-1}(z)-F_2^{-1}(z)\big)\,dzdy.
\end{array}
$$
Using the oddness of $W'$ and exchanging the role of $y$ and $z$ in the 
integral above, we also have
$$
\begin{array}{l}
\ds \frac{d}{dt} \int_0^1 \big|F_1^{-1}(z)-F_2^{-1}(z)\big|\,dz = \\[3mm]
-\ds \intdouble_{\{y\neq z\}}\big(W'(F^{-1}_1(z)-F^{-1}_1(y))
-W'(F^{-1}_2(z)-F^{-1}_2(y))\big)
\mbox{sign}\big(F_1^{-1}(y)-F_2^{-1}(y)\big)\,dzdy.
\end{array}
$$
Then we deduce
	\begin{equation}\label{uniq1}\begin{array}{ll}
\ds \frac{d}{dt} \int_0^1 \big|F_1^{-1}(z)-F_2^{-1}(z)\big|\,dz = &\ds\frac 12
\ds \intdouble_{\{y\neq z\}}\big(W'(F^{-1}_1(z)-F^{-1}_1(y))
-W'(F^{-1}_2(z)-F^{-1}_2(y))\big)\times  \\[4mm]
&\ds \big(\mbox{sign}\big(F_1^{-1}(z)-F_2^{-1}(z)\big)-\mbox{sign}\big(F_1^{-1}(y)-F_2^{-1}(y)\big)\big)\,dzdy.
	\end{array}\end{equation}
The one-sided Lipschitz estimate for $W'$ in \eqref{WOSL}
implies that the integrand in the right-hand side is bounded by
	$$
2\lambda\big|F_1^{-1}(z)-F_1^{-1}(y)-F_2^{-1}(z)+F_2^{-1}(y)\big|.
	$$
Hence, after an integration, we deduce
	$$
\frac{d}{dt}\int_0^1 |F_1^{-1}-F_2^{-1}|(z)\,dz \leq 2\lambda \int_0^1 |F_1^{-1}-F_2^{-1}|(z)\,dz.
	$$
Since $\|(F_1^{-1}-F_2^{-1})(t)\|_{L^1(0,1)}= d_{W1}(\rho_1,\rho_2)$, we conclude the proof by a Gronwall argument.
\end{proof}

{\bf Proof of Theorem \ref{th:duality}.}
The existence has been obtained in Section \ref{existence}.
Then if we have two duality solutions $\rho_1$ and $\rho_2$ as 
in Theorem \ref{th:duality}, Proposition \ref{propuniq} implies 
that their generalized inverse are equal. Therefore $\rho_1=\rho_2$.
Finally, the second point of Theorem \ref{ExistDuality} allows to
define the duality solution as the push-forward of $\rho^{ini}$ by the backward flow.
\qed

\subsection{Proof of Theorem \ref{th:link}}\label{link}
To cope with gradient flow solutions, we need first to prove that the second order moment 
is bounded provided $\rho^{ini}\in \calP_2(\RR)$.
We follow the idea of the proof of finite first order moment
in subsection \ref{existence}: we consider an approximation
of $\rho^{ini}$ by $\sum_{i=1}^n m_i \delta_{x_i^0}$ and build
the corresponding duality solution $\rho_n(t,x)=\sum_{i=1}^{n_\ell} m_i \delta_{x_i(t)}$
where the dynamics of the nodes $\{x_i\}_{i=1,\ldots,n_\ell}$ is given in \eqref{EDOxi}.
Let us denote by $j_2^n(t):=\sum_{i=1}^{n_\ell} m_i x_i^2(t)$ the second order moment.
We have $j_2^n(0)<+\infty$. Using \eqref{EDOxi}, we compute
$$
\frac{d}{dt}j_2^n(t) = \sum_{i=1}^{n_\ell} 2 m_i x_i \sum_{j\neq i} m_j W'(x_i-x_j)
\leq C \sum_{i=1}^{n_\ell} m_i |x_i| = C j_1^n(t).
$$
where we use {\bf (A4)} 
for the last inequality.
Since $j_1^n$ is uniformly bounded (see \eqref{boundj1}), we deduce that $j_2^n$
is uniformly bounded on $[0,T]$ by a constant only depending on $T$ and on $j_2^n(0)$.
Then we can pass to the limit $n\to +\infty$ to obtain a bound 
on $\int_\RR |x|^2 \rho(t,dx)$ for any $t>0$.
Moreover, 
we deduce that the velocity field $\widehat{a}_\rho$ defined in \eqref{achapo} 
is bounded in $L^1((0,T);L^2(\rho))$.

Now, if $\rho$ is a duality solution as in Theorem \ref{th:duality}, then
it satisfies \eqref{rhodis} in the distributional sense.
Using \cite[Theorem 8.3.1]{Ambrosio} and the $L^1((0,T);L^2(\rho))$
bound on the velocity $\widehat{a}_\rho$, we deduce that 
$\rho\in AC_{loc}^2((0,+\infty);\calP_2(\RR))$.
Thus $\rho$ is a gradient flow solution (see \cite{Carrillo}
or \cite[Sections 8.3 and 8.4]{Ambrosio}). This concludes the
proof of point $(i)$ of Theorem \ref{th:link}.

Conversely, if $\rho$ is a gradient flow solution of Theorem \ref{GradFlow},
by uniqueness of both duality solutions and gradient flow solutions,
we deduce that $\rho$ is also a duality solution.
\qed

\Section{On the case $a\neq \mbox{id}$}\label{anonid}

The situation $a\neq\mbox{id}$ is not so favourable as the previous one, and
one has to impose restrictions on the potential $W$. First we recall that attractivity implies that $a$ is
non-decreasing, see \eqref{hyp_a}. Next, we need to assume that $W$ has the following structure, see \eqref{hyp},
	$$
W'' = -\delta_0 + w,\qquad w \in Lip\cap L^\infty(\RR).
	$$
With these assumptions, we are able to prove existence and uniqueness of duality solutions, Theorem \ref{dual_anonid}, this is 
the aim of subsection \ref{dualanonid}. Next, in subsection \ref{gradflownonid}, we turn to gradient flow, which are definitely
 not well suited for that case, since we have to restrict ourselves to $w=0$ in the previous assumption on $W$.

\subsection{Duality solutions}\label{dualanonid}
Here we prove Theorem \ref{dual_anonid}, following the same strategy as in the linear case: first we prove the OSL condition, 
next establish the dynamics of aggregates, which leads to existence by approximation. Finally, uniqueness follows from a 
contraction principle in the space ${\mathcal P}_1$. In addition, we prove that duality solutions are absolutely continuous 
in time.

\subsubsection{OSL condition}
The first step consists in checking the OSL property for $a$.
\begin{lemma}\label{aOSL}
Assume $0\leq\rho\in {\cal M}_b(\RR)$ and that \eqref{hyp_a} holds, 
If Assumption \ref{assump} is satisfied, then the function
$x\mapsto a(W'*\rho)$ satisfies the OSL condition \eqref{OSLC}.
\end{lemma}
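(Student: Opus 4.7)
The plan is to compute the distributional derivative of $a(W'*\rho)$ and bound its positive part using the structural hypotheses on $W$ and $a$.

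First I would observe that, under Assumption \ref{assump}, the convolution $u := W'*\rho$ is a bounded BV function on $\RR$: indeed, its distributional derivative is
$$
\pa_x u = W''*\rho = -\rho + w*\rho,
$$
which is a bounded Radon measure since $\rho \in \calM_b(\RR)$ and $w\in L^\infty(\RR)$. The bound \eqref{boundW} (or any similar control coming from \textbf{(A0)--(A3)}; alternatively $W'\in L^\infty_{loc}$ is enough since $\rho$ is finite) ensures $u\in L^\infty_{loc}(\RR)$.

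Next, I would apply the Vol'pert chain rule \eqref{eq:volpert1}--\eqref{eq:volpert2} to the composition $a\circ u$, which is legitimate since $a\in C^1(\RR)$ with bounded derivative and $u\in BV_{loc}(\RR)$. This yields, as a measure identity,
$$
\pa_x\bigl(a(W'*\rho)\bigr) = \achapo_u\,\pa_x u = \achapo_u\bigl(-\rho + w*\rho\bigr),
$$
where $\achapo_u(x)$ is given by \eqref{aVolpert}. By construction and the monotonicity assumption \eqref{hyp_a}, one has $0\le \achapo_u\le \alpha$ pointwise.

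Now I would split the right-hand side into its two pieces. Since $\rho\ge 0$ and $\achapo_u\ge 0$, the term $-\achapo_u\,\rho$ is a nonpositive measure and contributes nothing to the upper bound. For the remaining part, $w*\rho$ is a bounded function with $\|w*\rho\|_{L^\infty}\le \|w\|_{L^\infty}|\rho|(\RR)$, so
$$
\achapo_u\,(w*\rho) \;\le\; \alpha\,\|w\|_{L^\infty}\,|\rho|(\RR)
$$
in the sense of functions, hence in the sense of distributions. Combining these estimates gives
$$
\pa_x\bigl(a(W'*\rho)\bigr)\;\le\;\alpha\,\|w\|_{L^\infty}\,|\rho|(\RR)
\qquad\text{in }\calD'(\RR),
$$
which is precisely the OSL bound \eqref{OSLC} with a constant $\beta$ depending only on $\alpha$, $\|w\|_{L^\infty}$ and the total mass. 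The only subtle step is the Vol'pert chain rule: the arbitrariness of $\achapo_u$ outside $J_u\cup(\RR\setminus S_u)$ is harmless here because $\pa_x u$ gives no mass to $S_u\setminus J_u$ (by \cite{ambrosioBV}, Proposition 3.92), so the product $\achapo_u\,\pa_x u$ is unambiguous. That is the main obstacle, and it is already handled by the formalism set up in Section \ref{vel:flux}.
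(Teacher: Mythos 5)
Your proof is correct and follows essentially the same route as the paper: differentiate $a(W'*\rho)$ using $W''*\rho=-\rho+w*\rho$, discard the nonpositive term coming from $-\rho$ thanks to $\rho\ge 0$ and $0\le a'\le\alpha$, and bound the remaining term by $\alpha\,\|w\|_{L^\infty}|\rho|(\RR)$; the paper simply writes the chain rule with $a'(W'*\rho)$, so your Vol'pert-based treatment is just a more careful rendering of the same step. One small correction: the Vol'pert factor for the composition $a\circ u$ is the averaged derivative $\int_0^1 a'(tu_1+(1-t)u_2)\,dt$ (the difference quotient of $a$ at jump points), not the expression \eqref{aVolpert}, which is the factor associated with $A\circ u$; with that factor the pointwise bound $0\le\achapo_u\le\alpha$ you invoke is exactly what \eqref{hyp_a} gives.
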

\begin{proof}
Using \eqref{hyp}, we deduce that
$$
\pa_{xx}W*\rho = -\rho + w*\rho.
$$
Therefore,
$$
\pa_x(a(\pa_xW*\rho)) = a'(\pa_xW*\rho)(-\rho+w*\rho)
\leq a'(\pa_xW*\rho) w*\rho,
$$
where we use the nonnegativity of $\rho$ in the last inequality.
Then from \eqref{hyp_a} we get
$$
\pa_x(a(\pa_xW*\rho)) \leq \alpha \|\rho\|_{L^1} \|w\|_{L^\infty}.
$$
It implies the OSL condition on the velocity.
\end{proof}

\subsubsection{Proof of the existence result in Theorem \ref{dual_anonid}}\label{exist_anonid}

{\bf Approximation by aggregates.} 

Following the idea in subsection \ref{existence}, we first approximate 
the initial data $\rho^{ini}$ by a finite sum of Dirac masses:
$\rho^{ini}_n=\sum_{i=1}^n m_i \delta_{x_i^0}$ where $x_1^0<x_2^0<\dots<x_n^0$
and the $m_i$-s are nonnegative.
We assume that $\sum_{i=1}^n m_i=1$ and $\sum_{i=1}^n m_i |x_i^0|<+\infty$,
i.e. $\rho^{ini}_n\in \calP_1(\RR)$.
We look for a sequence $(\rho_n)_n$ solving in the distributional sense
$\pa_t\rho_n +\pa_x J_n=0$ where the flux $J_n$ is given by \eqref{DefFluxJ}.
A function $\rho_n(t,x)=\sum_{i=1}^n m_i \delta_{x_i(t)}$ is such a 
solution provided the function $u_n$ defined by
\beq\label{defu}
u_n(t,x):=\int^x\rho_n\,dx = \sum_{i=1}^n m_i H(x-x_i(t)),
\eeq
where $H$ denotes the Heaviside function, is a distributional solution to
\beq\label{equdis}
\pa_t u_n -\pa_x\big(A(\pa_xW*\rho_n)\big) + a(\pa_xW*\rho_n) w*\rho_n =0.
\eeq
From \eqref{hyp}, we deduce that
\beq\label{derW}
W'(x) = - H(x) + \widetilde{w}(x), \quad \mbox{ where }
\widetilde{w}(x) = \int^x_0 w(y)\,dy +\frac 12.
\eeq
Then, we have
\begin{equation}
  \label{eq:Wrho+}
W'*\rho_n(x_i^+) = -\sum_{j=1}^i m_j + 
\sum_{j=1}^n m_j \widetilde{w}(x_i-x_j).
\end{equation}
And
\begin{equation}
  \label{eq:Wrho-}
W'*\rho_n(x_i^-) = m_i + W'*\rho_n(x_i^+).
\end{equation}
From these identities together with \eqref{aVolpert}, 
straightforward computations show that in the distributional sense
\beq\label{calAdis}
\pa_x \big(A(W'*\rho_n)\big) = a(W'*\rho_n)w*\rho_n + 
\sum_{i=1}^n [A(W'*\rho_n)]_{x_i} \delta_{x_i},
\eeq
where $[f]_{x_i}=f(x_i^+)-f(x_i^-)$ is the jump of the function $f$ at $x_i$.
Injecting \eqref{defu} and \eqref{calAdis} in \eqref{equdis}, we find
$$
-\sum_{i=1}^n m_i x'_i(t) \delta_{x_i(t)} = \sum_{i=1}^n [A(W'*\rho_n)]_{x_i} \delta_{x_i}.
$$
Thus it is a solution if we have
\beq\label{dynagg}
m_i x'_i(t) = -[A(W'*\rho_n)]_{x_i(t)}, \quad \mbox{ for } i=1,\dots, n.
\eeq
This system of ODEs is complemented by the initial data $x_i(0)=x_i^0$.
Thus we are looking for absolute continuous solutions to the integral problem
\begin{equation}\label{xibis}
x_i(t) = x_i^0 + \int_0^t \frac{[A(W'*\rho_n(s))]_{x_i(s)}}{[W'*\rho_n(s)]_{x_i(s)}}\,ds ,
\qquad i=1,\ldots n.
\end{equation}

Then we define the dynamics of aggregates as in subsection \ref{existence}~:
\begin{itemize}
\item When the $x_i$ are all distinct, they are solutions of system 
\eqref{dynagg} or equivalently \eqref{xibis} (with zero right hand 
side if $n_\ell =1$, where we recall $n_\ell(t)$ is the number of
distinct particles at time $t$).
\item At collisions, we use we use the same sticky dynamics as above.
\end{itemize}
We recall that this choice of the dynamics implies mass conservation. As above, we have existence of the sequence $(x_i)_i$ satisfying 
\eqref{xibis} on $[0,T]$ with initial condition $(x_i^0)$.
Then we set $\rho_n(t,x) = \sum_{i=1}^{n_\ell} m_i\delta_{x_i(t)}(x)$. 
By construction, $\rho_n$ is a solution in the sense of distribution of
\eqref{eqrhodis}-\eqref{DefFluxJ} for given initial data $\rho_n^{ini}$.

{\bf Finite first order moment.}

As in subsection \ref{existence}, we define $j_1^n(t)=\sum_{i=1}^{n_\ell} m_i |x_i(t)|$
and we compute
	$$
\frac{d}{dt}j_1^n(t)= \sum_{i=1}^{n_\ell} m_i \frac{x_i}{|x_i|}\frac{[A(W'*\rho_n)]_{x_i}}{[W'*\rho_n]_{x_i}},
	$$
where we use \eqref{xibis}. 
From {\bf (A4)} and the fact that $a'$ is bounded \eqref{hyp_a}, we deduce that
$a(W'*\rho_n)$ is uniformly bounded.
Moreover, since $a$ is nondecreasing, $A$ is a convex function, therefore the quantity
$\ds \frac{[A(W'*\rho_n)]_{x_i}}{[W'*\rho_n]_{x_i}}$ is uniformly bounded.
Then we have
\beq\label{boundj1bis}
j_1^n(t) \leq C T + j_1^n(0), \qquad \forall t\in [0,T],
\eeq
where $C$ stands for a generic nonnegative constant.

{\bf Existence of duality solutions}


By the Vol'pert calculus recalled in Section \ref{vel:flux}, we have
$$
J_n:=-\pa_x(A(W'*\rho_n))+a(W'*\rho_n)w*\rho_n = \achapo_n \rho_n, 
\quad \mbox{ and } \quad \achapo_n=a(W'*\rho_n) \mbox{ a.e. }
$$
Then $\rho_n$ is a solution in the distributional sense of
$$
\pa_t\rho_n + \pa_x(\achapo_n \rho_n) = 0.
$$
Moreover, by definition $a(W'*\rho_n)$ is piecewise
continuous with the discontinuity lines defined by $x=x_i$, $i=1,\dots,n$,
and by assumption \ref{assump1} it is bounded in $L^\infty$.
We can apply Theorem \ref{dual2distrib} which gives that 
$\rho_n$ is a duality solution and that $\achapo_n$ is a universal
representative of $a(W'*\rho_n)$. Then the flux is given by
$a(W'*\rho_n)\Dpetit \rho_n=J_n$.

{\bf General case. }

Let us yet consider the case of any initial data $\rho^{ini}\in \calP_1(\RR)$.
We approximate $\rho^{ini}$ by 
$\rho^{ini}_n=\sum_{i=1}^n m_i \delta_{x_i^0}$, $\rho^{ini}_n\in \calP_1(\RR)$
with $\rho_n^{ini}\rightharpoonup \rho^{ini}$ in $\calM_b(\RR)$.
By the same token as above, we can construct a sequence of solutions 
$(\rho_n)_n$ with $\rho_n(t=0)=\rho_n^{ini}=\sum_{i=1}^n m_i\delta_{x_i^0}$, 
which solves in the sense of distributions
$$
\pa_t\rho_n + \pa_x J_n=0, 
\quad J_n = -\pa_x \big(A(\pa_xW*\rho_n)\big)+a(\pa_xW*\rho_n) w*\rho_n,
$$
and which satisfies
$$
\achapo_n \rho_n = J_n, \quad \achapo_n=a(W'*\rho_n) \mbox{ a.e. }
$$
Moreover, since $W'*\rho_n$ is bounded in $L^\infty$ uniformly with 
respect to $n$ by construction and assumption {\bf (A4)},
we can extract a subsequence of $(a(W'*\rho_n))_n$ that converges
in $L^\infty -weak*$ towards $b$.
Since from Lemma \ref{aOSL}, $a(W'*\rho_n)$ satisfies the OSL condition, 
we deduce from Theorem \ref{ExistDuality} 4) that, up to an extraction,
$\rho_n\rightharpoonup \rho$ in $\smes$ and 
$\achapo_n\rho_n\rightharpoonup \achapo \rho$
weakly in $\calM_b(]0,T[\times\RR)$, $\rho$ being a duality solution
of the scalar conservation law with coefficient $b$. Then 
$J_n\to J:=-\pa_x(A(W'*\rho))+a(W'*\rho)w*\rho$ in ${\mathcal D}'(\RR)$ 
and that $a(W'*\rho_n)\to a(W'*\rho)$ a.e. By uniqueness of the weak
limit, we have $b=a(W'*\rho)$. Moreover $J=\achapo \rho$ a.e. and
$\rho$ satisfies then \eqref{eqrhodis}.
Finally, we recover the bound on the first order moment by passing to the limit 
$n\to +\infty$ in the estimate \eqref{boundj1bis}.
\qed

\begin{remark}
Let us consider the case studied in the previous Section~: $a=\mbox{id}$ and
$W$ is even. Since $W'$ is odd, then \eqref{derW} rewrites
	$$
W'(x) = -H(x) + w_0(x), \qquad \mbox{where }\ w_0(x)=\int_0^x w(y)\,dy+\frac 12.
	$$
When $a=\mbox{id}$, we have $A(x)=\frac 12 x^2$. Then system \eqref{dynagg}
rewrites
	$$
m_i x_i'(t) = -\frac 12 (W'*\rho_n(x_i^+)-W'*\rho_n(x_i^-))
(W'*\rho_n(x_i^+)+W'*\rho_n(x_i^-)).
	$$
Then, from \eqref{eq:Wrho+} and \eqref{eq:Wrho-}, we have
	$$
x'_i(t)= -\sum_{j=1}^{i-1} m_j -\frac{m_i}{2} +
\sum_{j=1}^n m_j w_0(x_i-x_j)= -\sum_{j=1}^{i-1} m_j +\sum_{j\neq i} m_j w_0(x_i-x_j).
	$$
From the expression of $W'$ above, we deduce that
$x'_i(t)=\sum_{j\neq i} m_j W'(x_i-x_j)$ and we recover 
the dynamical system \eqref{EDOxi} of the previous Section.
\end{remark}

\begin{remark}
The dynamical system \eqref{dynagg} defines actually the macroscopic velocity.
Indeed, if we formally take the limit $n\to+\infty$ of the 
right-hand side of \eqref{xibis}, this latter term converges towards the velocity $\achapo_u$
defined by the chain rule \eqref{aVolpert}
\end{remark}

\subsubsection{Uniqueness.}

We first notice that the strategy used in subsection \ref{uniqueness} cannot 
be used here, since it strongly relies on the linearity of $a$.
Then we have to use the approach proposed in \cite{jamesnv} which uses
an entropy estimate.
The key point is to observe that the quantity $W'*\rho$ solves a
scalar conservation laws with source term.
\begin{proposition}[Entropy estimate]\label{entropy}
Let us assume that Assumptions \ref{assump} and \eqref{hyp_a} hold.
For $T>0$, let $\rho\in C([0,T],\calP_1(\RR))$ satisfying in the distribution sense
\eqref{eqrhodis}-\eqref{DefFluxJ}.
Then $u:=W'*\rho$ is a weak solution of 
\beq\label{eq:U}
\pa_t u+\pa_x A(u) = a(u)w*\rho+\pa_x(w*A(u))-w*(a(u)w*\rho).
\eeq
Moreover, if we assume that the entropy condition
\beq\label{entropycond}
\pa_x u \leq w*\rho
\eeq
holds, then for any twice continuously differentiable convex function $\eta$,
we have
\beq\label{eq:etaU}
\pa_t \eta(u)+\pa_x q(u) -\eta'(u) a(u)w*\rho+\eta'(u)\big(\pa_x(w*A(u))-w*(a(u)w*\rho))
\leq 0,
\eeq
where the entropy flux is given by $\ds q(x)=\int_0^x \eta'(y)a(y)\,dy$.
\end{proposition}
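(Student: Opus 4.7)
The plan is two-fold: first derive the equation \eqref{eq:U} for $U=W'*\rho$ by convolving the distributional identity \eqref{eqrhodis}--\eqref{DefFluxJ} with $W'$, then view \eqref{eq:U} as a scalar conservation law with convex flux $A$ and a Lipschitz source, and recognize \eqref{entropycond} as the Oleinik-type condition that selects its entropy solution.

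For \eqref{eq:U}, the workhorse is the identity $\pa_x(W'*\varphi) = W''*\varphi = -\varphi + w*\varphi$ granted by Assumption \ref{assump}. Convolving $\pa_t\rho + \pa_x J = 0$ with $W'$ and commuting convolution with $\pa_t,\pa_x$ yields $\pa_t U + \pa_x(W'*J)=0$. Writing
\begin{equation*}
W'*J \;=\; -\,W'*\pa_x A(U) \;+\; W'*\bigl(a(U)\,w*\rho\bigr) \;=\; A(U)-w*A(U) \;+\; W'*\bigl(a(U)\,w*\rho\bigr),
\end{equation*}
taking one further spatial derivative, and applying the same identity to the last term leads after rearrangement to \eqref{eq:U}. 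The intermediate convolutions are well-defined because $\rho\in\calP_1$, $U\in L^\infty\cap BV$, $A(U)\in L^\infty$, and $w\in Lip\cap L^\infty$.

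For the entropy inequality I start from the observation that, again by \eqref{hyp}, $\pa_x U = W''*\rho = -\rho + w*\rho$, so \eqref{entropycond} is \emph{exactly} $\rho\ge 0$; in particular it is automatic for the duality solutions built in Subsection \ref{exist_anonid}. Rewriting \eqref{eq:U} as $\pa_t U + \pa_x A(U) = F$ with $F := a(U)w*\rho + \pa_x(w*A(U)) - w*(a(U)w*\rho)$, I interpret it as a scalar conservation law with convex flux $A$ (since $a=A'$ is nondecreasing by \eqref{hyp_a}) and Lipschitz source $F$. The one-sided bound \eqref{entropycond} is an Oleinik-type upper bound on $\pa_x U$ pinning down $U$ as the entropy solution among possibly many distributional solutions. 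For $C^2$ convex $\eta$, multiplying \eqref{eq:U} formally by $\eta'(U)$ gives the pointwise identity $\pa_t\eta(U) + \eta'(U)\pa_x A(U) = \eta'(U)F$; the Vol'pert chain rule applied to the $BV$ function $U$ handles the diffuse part as a genuine equality $\eta'(U)\pa_x A(U)=\pa_x q(U)$, while on the jump set of $U$ convexity of $\eta$ combined with the admissibility imposed by \eqref{entropycond} produces the inequality $\pa_x q(U)\le \eta'(U)\pa_x A(U)$. Adding the two contributions yields \eqref{eq:etaU}.

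The main obstacle will be the jump-set step. At a shock $(U^-,U^+)$ of $U$ one has to verify $q(U^+)-q(U^-)\le \bar{\eta'}\,(A(U^+)-A(U^-))$ for a suitable mean value $\bar{\eta'}$, exploiting convexity of both $A$ and $\eta$ and the sign constraint $U^+\le U^-$ forced by \eqref{entropycond}. A cleaner alternative, and the one I would actually implement, is to regularize \eqref{eqrhodis} by vanishing viscosity: on the regularized equation the multiplication by $\eta'(U_\eps)$ produces the entropy \emph{identity} as an exact equality plus a nonnegative dissipation term, and passing to the limit $\eps\to 0$ (using the uniform $L^\infty$ and one-sided bounds already at hand) preserves the inequality sign, exactly as is done in the particular case treated in \cite{jamesnv}.
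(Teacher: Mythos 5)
Your derivation of \eqref{eq:U} is exactly the paper's first step (convolve \eqref{eqrhodis}--\eqref{DefFluxJ} with $W'$ and use $W''=-\delta_0+w$ from \eqref{hyp}), and the computation is correct; so is your observation that, since $\pa_x U=-\rho+w*\rho$, the entropy condition \eqref{entropycond} is just $\rho\geq 0$. For the entropy inequality the paper gives no detail beyond ``adapt Lemma 4.5 of \cite{jamesnv}'', and your first route --- Vol'pert chain rule on the diffuse part, plus admissibility of the jumps of $U$ (which are downward by \eqref{entropycond}) for the convex flux $A$ --- is the right mechanism, i.e.\ the Oleinik-type one-sided condition forcing the Kru\v{z}kov inequalities for a convex flux.

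The genuine problem is the alternative you say you would actually implement: vanishing viscosity applied to \eqref{eqrhodis} cannot prove the statement as formulated. The proposition asserts \eqref{eq:etaU} for an \emph{arbitrary} $\rho\in C([0,T];\calP_1(\RR))$ solving \eqref{eqrhodis}--\eqref{DefFluxJ} in the sense of distributions and satisfying \eqref{entropycond}; it is precisely the tool used afterwards to prove uniqueness. A viscous (or otherwise regularized) \emph{equation} yields the entropy inequality only for solutions obtained as limits of those approximations, and nothing tells you that the given $\rho$, or $U=W'*\rho$, is such a limit --- assuming so would presuppose the uniqueness being established. The regularization that works for a given solution is a regularization of the \emph{solution}: mollify $U$ in $x$, multiply the mollified equation by $\eta'(U_\eps)$, and use \eqref{entropycond} together with the convexity of $\eta$ and the monotonicity of $a$ in \eqref{hyp_a} to give the commutator (equivalently, the jump contributions) the favourable sign before letting $\eps\to 0$; this is the natural reading of the paper's ``adaptation of Lemma 4.5 of \cite{jamesnv}''. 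Note also that this mollification step is not optional in your primary route either: at the atoms of $\pa_x U$ the products $\eta'(U)\,\pa_x A(U)$ and $\eta'(U)\,\pa_t U$ are ill-defined as written, so the ``pointwise identity'' you invoke before the jump analysis must itself be obtained through the regularized computation.
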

\begin{proof}
Equation \eqref{eq:U} is obtained by taking the convolution product
of \eqref{eqrhodis} with $W'$.
The entropy inequality is then a straightforward adaptation of the proof of Lemma
4.5 of \cite{jamesnv}.
\end{proof}

We turn now to the proof of the uniqueness.
Once again, we use the idea developed in \cite{jamesnv} 
and extend it to the case at hand.
Consider two solutions $\rho_1$ and $\rho_2$ such as in 
Theorem \ref{dual_anonid}. We denote $u_1:=W'*\rho_1$ and 
$u_2=W'*\rho_2$.
Starting from the entropy inequality \eqref{eq:etaU} with the family 
of Kru\v{z}kov entropies $\eta_\kappa(u) = |u-\kappa|$ and using the
doubling of variable technique developed by Kru\v{z}kov, 
we obtain as in the proof of Theorem 5.1 of \cite{jamesnv}
$$
\frac{d}{dt} \int_\RR \big|u_1-u_2\big| \leq 
\|w\|_{Lip} \int_\RR \big|A(u_1)-A(u_2)\big|\,dx + 
\big(1+\|w\|_\infty\big) \int_\RR \big|a(u_1)w*\rho_1-a(u_2)w*\rho_2\big|\,dx.
$$
From {\bf (A4)} 
and the bound of $\rho(t)$ in $\calP_1(\RR)$ for all $t$, 
we deduce that $u_i$, $i=1,2$ are bounded in $L^\infty_{t,x}$.
Then we get
\beq\label{uniqU}
\frac{d}{dt} \int_\RR \big|u_1-u_2\big| \leq C\Big(
\int_\RR \big|u_1-u_2\big|\,dx + \int_\RR \big|w*\rho_1-w*\rho_2\big|\,dx\Big),
\eeq
where we use moreover \eqref{hyp_a}.
Taking the convolution with $w$ of equation \eqref{eqrhodis} we deduce
$$
\pa_t w*\rho_i - \pa_x\big(w*A(u_i)\big) + w*\big(a(u_i)w*\rho_i\big)=0, \qquad i=1,2.
$$
We deduce from \eqref{hyp_a} and the Lipschitz bound of $w$ that
\beq\label{uniqw}
\frac{d}{dt} \int_\RR \big|w*\rho_1-w*\rho_2\big| \leq C\Big(
\int_\RR \big|u_1-u_2\big|\,dx + \int_\RR \big|w*\rho_1-w*\rho_2\big|\,dx\Big).
\eeq
Adding \eqref{uniqU} and \eqref{uniqw}, we deduce applying a Gronwall lemma that
$u_1=u_2$ and $w*\rho_1=w*\rho_2$, which implies $\rho_1=\rho_2$. \qed

\begin{remark}
We point out that the entropy condition \eqref{entropycond} 
is equivalent to $\rho\geq 0$, which is required since $\rho$ is supposed to
be a density. As a consequence, if we allow $\rho$ to be nonpositive, 
uniqueness of solutions is not guaranteed (see Section 5.3 of \cite{jamesnv}
for a counter-example of uniqueness in the case $W(x)=\frac 12 e^{-|x|}$).
\end{remark}

\subsubsection{Absolute continuity.}
Following subsection \ref{link}, we have the following result:
	\begin{proposition}\label{AC2}
Under the assumptions of Theorem \ref{dual_anonid}, if moreover
$\rho^{ini}\in \calP_2(\RR)$, then the duality solution of Theorem
\ref{dual_anonid} satisfies
$\rho\in AC_{loc}^2((0,+\infty);\calP_2(\RR))$.
	\end{proposition}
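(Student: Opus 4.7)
The plan is to mimic the argument of Subsection \ref{link} used in the linear case, adapting the ODE estimate on the second order moment to the dynamics \eqref{xibis} of aggregates in the nonlinear setting, and then to invoke Theorem 8.3.1 of \cite{Ambrosio} once we have a suitable bound on the velocity in $L^2(\rho)$.

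\textbf{Step 1: propagation of the second moment at the aggregate level.} I approximate $\rho^{ini}\in\calP_2(\RR)$ by $\rho^{ini}_n=\sum_{i=1}^n m_i\delta_{x_i^0}$ with $j_2(0)=\sum_i m_i (x_i^0)^2$ uniformly bounded, and consider the associated approximate solution $\rho_n=\sum_i m_i\delta_{x_i(t)}$ constructed in Subsection \ref{exist_anonid}, whose nodes satisfy \eqref{xibis}. The key observation is that, since $A'=a$, the mean value theorem applied to the convex function $A$ gives
$$
x'_i(t)=\frac{[A(W'*\rho_n)]_{x_i}}{[W'*\rho_n]_{x_i}}=a(\xi_i(t)),
$$
for some $\xi_i(t)$ between $W'*\rho_n(x_i^-)$ and $W'*\rho_n(x_i^+)$. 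Combining \eqref{eq:Wrho+}--\eqref{eq:Wrho-} with the linear growth of $\widetilde w$ inherited from $w\in L^\infty(\RR)$, and with \eqref{hyp_a} which gives $|a(\xi)|\le|a(0)|+\alpha|\xi|$, I obtain
$$
|x'_i(t)|\leq C\Bigl(1+|x_i(t)|+\sum_{j=1}^n m_j|x_j(t)|\Bigr)=C\bigl(1+|x_i(t)|+j_1(t)\bigr).
$$

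\textbf{Step 2: Gronwall estimate on $j_2$.} Setting $j_2(t)=\sum_i m_i x_i^2(t)$ and using Step 1,
$$
j_2'(t)=2\sum_i m_i x_i(t)\,x'_i(t)\leq C\sum_i m_i |x_i|\bigl(1+|x_i|+j_1(t)\bigr).
$$
Since $\sum_i m_i=1$, the Cauchy--Schwarz inequality gives $j_1(t)^2\leq j_2(t)$, so that
$$
j_2'(t)\leq C\bigl(1+j_1(t)+j_2(t)+j_1(t)^2\bigr)\leq C\bigl(1+j_2(t)\bigr).
$$
Gronwall's lemma then provides a bound on $j_2(t)$ on $[0,T]$ depending only on $T$ and $j_2(0)$, which is uniform in $n$.

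\textbf{Step 3: passage to the limit and application of Ambrosio's theorem.} Passing to the limit $n\to+\infty$ as in Subsection \ref{exist_anonid} and using Fatou's lemma yields $\rho\in L^\infty_{loc}((0,+\infty);\calP_2(\RR))$. To conclude I need to control the velocity in $L^2(\rho)$. The universal representative satisfies $\achapo_\rho(t,x)=a(W'*\rho(t,x))$ a.e., and \eqref{hyp_a} together with the linear growth \eqref{boundW} of $W'$ and the bound on $j_1(t)$ give the pointwise estimate
$$
|\achapo_\rho(t,x)|\leq |a(0)|+\alpha\,|W'*\rho(t,x)|\leq C\bigl(1+|x|+j_1(t)\bigr).
$$
Hence $\|\achapo_\rho(t)\|_{L^2(\rho(t))}^2\leq C(1+j_2(t)+j_1(t)^2)$ is locally bounded in time, so that $\achapo_\rho\in L^2_{loc}((0,+\infty);L^2(\rho))$. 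Since $\rho$ is a distributional solution of $\pa_t\rho+\pa_x(\achapo_\rho\rho)=0$, Theorem 8.3.1 of \cite{Ambrosio} immediately yields $\rho\in AC^2_{loc}((0,+\infty);\calP_2(\RR))$.

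The only genuinely new point compared with Subsection \ref{link} is the handling of the discrete jump ratio in \eqref{xibis}. The mean value identity $[A(U)]/[U]=a(\xi)$ reduces this to a bound on $W'*\rho_n$ at the atoms, which is immediate from \eqref{eq:Wrho+}--\eqref{eq:Wrho-} and the linear growth of $W'$; the rest of the argument is essentially a transcription of the linear case.
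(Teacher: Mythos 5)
Your proof is correct and follows essentially the same route as the paper: propagate the second moment along the aggregate dynamics \eqref{xibis} via a Gronwall argument (the mean value identity $[A(U)]/[U]=a(\xi)$ is just a cleaner phrasing of the paper's use of the convexity of $A$ and the bound on $a'$), pass to the limit, bound $\widehat a_\rho$ in $L^2(\rho)$ using \eqref{hyp_a} and the linear growth of $W'$, and invoke Theorem 8.3.1 of \cite{Ambrosio}. No substantive differences to report.
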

\begin{proof}
The proof of the finite second order moment follows straightforwardly
the one for the first order moment in subsection \ref{exist_anonid}.
Then from \eqref{hyp_a} and {\bf (A4)},
we have that $a(W'*\rho)$ is uniformly bounded in $L^\infty$.
Therefore the velocity field $\widehat{a}$ is bounded in $L^1((0,T);L^2(\rho))$.
Moreover $\rho$ is a solution in the distributional sense of 
$\pa_t\rho + \pa_x(\widehat{a}\rho) = 0$.
We conclude then from Theorem 8.3.1 of \cite{Ambrosio} that
$\rho\in AC_{loc}^2((0,+\infty);\calP_2(\RR))$.
\end{proof}

\subsection{Gradient flows}\label{gradflownonid}
When $a\neq \mbox{id}$, equation \eqref{EqInter} does not have a structure of
a gradient. Moreover, we are not able to determine a conserved energy 
corresponding to the system. Nevertheless, in the particular case $W(x)=-\frac 12 |x|$, we are able to
adapt the technique of \cite{Carrillo} to recover the existence 
of gradient flow solutions.
Before inroducing the energy functional $\calW$ for this case, let us
first recall the observation of Section \ref{vel:flux}. 
Denoting $A$ an antiderivative of $a$, we have from the 
Vol'pert calculus
\beq\label{def:u}
\pa_x (A(u)) = \achapo_u(x)\pa_xu,
\quad \mbox{ where }\ 
u(x) = \int_{y\neq x}W'(x-y)\rho(dy)=\frac 12\big(\rho(x,+\infty)-\rho(-\infty,x)\big).
\eeq
The function $\achapo_u$ is defined in \eqref{aVolpert} and we recall 
that when $a=id$, $\achapo_u=u$.
Then, we define for $\rho\in \calP_2(\RR)$ the functional
\beq\label{Wanonid}
\calW(\rho) = -\int_\RR x \achapo_u(x) \rho(dx), \qquad u=W'*\rho.
\eeq
We first verify  that when $a=\mbox{id}$ this energy is equal to 
the one introduced in \eqref{nrjinter}. In fact, we have for 
$W(x)=-\frac 12 |x|$,
\begin{equation}\label{egalW1}
\frac 14 \int_{\RR^2} |x-y|\rho(dx)\rho(dy) = 
\frac 14 \int_\RR\int_{-\infty}^x (x-y)\rho(dy)\rho(dx) -
\frac 14 \int_\RR\int_x^{+\infty} (x-y)\rho(dy)\rho(dx).
\end{equation}
Repeated use of Fubini's Theorem in the last term of the right hand side leads to
	$$\begin{array}{ll}
\ds \frac 14 \int_{\RR^2} |x-y|\rho(dx)\rho(dy) &\ds = 
\frac 12 \int_\RR\int_{-\infty}^x (x-y)\rho(dy)\rho(dx)  \\[3mm]
&\ds = \frac 12 \int_\RR x\int_{-\infty}^x \rho(dy)\rho(dx)
-\frac 12 \int_\RR\int_y^{+\infty}\rho(dx) y\rho(dy).
	\end{array}$$
With the definition of $u$ in \eqref{def:u}, we have
	$$
\frac 14 \int_{\RR^2} |x-y|\rho(dy)\rho(dx) = -\int_\RR x u(x) \rho(dx),
	$$
which concludes the proof. 

We are able to prove in this case the following Theorem.
	\begin{theorem}\label{th:GFanonid}
Let $W(x)=-\frac 12 |x|$ and $a$ satisfy assumption \eqref{hyp_a}.
Let $\rho^{ini}\in \calP_2(\RR)$ be given.
\begin{itemize}
\item[(i)] There exists a unique gradient flow 
solution $\rho\in AC_{loc}^2([0,+\infty),\calP_2(\RR))$ in the sense of
Definition \ref{defgradflow}. Therefore $\rho$ satisfies in the 
distributional sense 
	$$
\pa_t\rho + \pa_x(\achapo_u\rho) = 0, \quad \mbox{ with }
\quad \rho(0)=\rho^{ini},
	$$
where $u(x)=W'*\rho(x)=\frac 12(\rho(x,+\infty) -\rho(-\infty,x))$ and 
$\achapo_u$ is defined in \eqref{aVolpert}.
Moreover, this solution is unique and 
we have the energy estimate~: for all $0\leq t_0\leq t_1 <\infty$,
	$$
\int_{t_0}^{t_1} |\achapo_u(t,x))|^2\,\rho(t,dx)\,dt  + \calW(\rho(t_1))
	= \calW(\rho(t_0)).
	$$
\item[(ii)] The duality solution of Theorem \ref{dual_anonid} satisfies
$\rho(t)\in \calP_2(\RR)$ for all $t\geq 0$ and coincides with the gradient flow
solution of the first item. Moreover we have $\rho=-\pa_xu$ where $u$
is the unique entropy solution of the scalar conservation law 
$$
\pa_t u + \pa_x A(u) = 0, \qquad u(0,x)=W'*\rho^{ini},
$$
where $A$ is an antiderivative of $a$.
\end{itemize}
\end{theorem}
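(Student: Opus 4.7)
The plan is to exploit the fact that $W(x)=-\tfrac12|x|$ gives $W''=-\delta_0$, so $w\equiv 0$ in Assumption \ref{assump} and the flux \eqref{DefFluxJ} reduces to $J=-\partial_xA(u)$ with $u=W'*\rho=\tfrac12-F(\cdot)$, $F$ being the cumulative distribution of $\rho$. This turns the continuity equation into a scalar conservation law, which I would use as the backbone of the whole proof. I would first establish part (ii). Let $\rho$ be the unique duality solution supplied by Theorem \ref{dual_anonid}; Proposition \ref{AC2} gives $\rho\in AC_{loc}^2((0,+\infty);\calP_2(\RR))$. Integrating \eqref{eqrhodis} in $x$ and using $\rho=-\partial_xu$ yields
$$
\partial_t u + \partial_x A(u) = 0 \qquad \text{in } \calD'((0,T)\times\RR),
$$
so that $U:=-F=u-\tfrac12$ solves the scalar conservation law $\partial_tU+\partial_x\widetilde{A}(U)=0$ with $\widetilde{A}(U):=A(U+\tfrac12)$ and initial datum $U(0,\cdot)=-\rho^{ini}((-\infty,\cdot])$. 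Applying Proposition \ref{entropy} with $w=0$ and the sign condition $\partial_xu\le 0$ (equivalent to $\rho\ge 0$) transfers into the Kruzhkov entropy inequality for $U$, so $U$ is the unique entropy solution; monotonicity preservation for scalar conservation laws then ensures $\rho=-\partial_xU$ is a well-defined nonnegative measure.

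For part (i) I would pass to the pseudoinverse representation $V(t,z):=F(t,\cdot)^{-1}(z)$, $z\in(0,1)$, which by \eqref{dWF-1} is an isometry between $(\calP_2(\RR),d_W)$ and the closed convex cone $\mathcal{C}\subset L^2(0,1)$ of nondecreasing functions. A change of variable that handles flats and jumps via the Vol'pert formula \eqref{aVolpert} rewrites
$$
\widetilde{\calW}(V) := \calW(\rho) = -\int_0^1 V(z)\,a\!\left(\tfrac12-z\right)\,dz,
$$
which is a continuous linear functional on $L^2(0,1)$, hence $0$-convex along generalized geodesics. Standard Hilbert-space subgradient flow theory under the convex constraint $V\in\mathcal{C}$ (or the Ambrosio--Gigli--Savaré framework) then yields existence and uniqueness of a gradient flow in the sense of Definition \ref{defgradflow}. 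The element of minimal norm in the subdifferential is computed via \eqref{aVolpert} and equals $-\achapo_u$, giving the announced velocity field. Both the gradient flow and the duality solution of part (ii) satisfy the same continuity equation with this velocity, and both can be identified as $-\partial_xU$ with $U$ the unique entropy solution from the first step; hence they coincide. The energy identity follows from the linearity of $\widetilde{\calW}$: along the flow, $\partial_tV=a(\tfrac12-z)$ away from shocks, so
$$
\frac{d}{dt}\widetilde{\calW}(V(t)) = -\int_0^1 \left|a\!\left(\tfrac12-z\right)\right|^2\,dz = -\int_\RR |\achapo_u(x)|^2\,\rho(t,dx),
$$
after a change of variable, and time integration yields the announced estimate.

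The hard part will be the compatibility between the three notions of solution --- duality, gradient flow, and entropy --- at shocks of $U$ (equivalently, atoms of $\rho$). In the pseudoinverse picture, adding $t\,a(\tfrac12-z)$ (a nonincreasing function of $z$) to a nondecreasing $V_0$ may break monotonicity; the resulting projection back onto $\mathcal{C}$ encodes the shock-formation mechanism on the $U$-side and the sticky-particle dynamics from Section \ref{exist_anonid}. Verifying that the Vol'pert averaged velocity \eqref{aVolpert} at jumps of $u$ coincides with the Rankine--Hugoniot shock speed of the scalar conservation law is the technical keystone that makes all three formulations agree. A secondary delicate point is justifying the change of variable defining $\widetilde{\calW}$ in the presence of atoms; this is handled by the Stieltjes form of the integral and the identity $\achapo_u\rho=\partial_xA(u)$ as Radon measures, itself a consequence of the Vol'pert chain rule.
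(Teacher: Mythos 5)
Your route is essentially the paper's: part (ii) through Proposition \ref{AC2}, Proposition \ref{entropy} with $w=0$ and uniqueness of the scalar conservation law, and part (i) through the isometry with pseudo-inverses, under which $\calW$ becomes the linear functional $-\int V(z)\,a(\tfrac12-z)\,dz$ and the JKO/implicit Euler step is explicit. However, two steps are genuinely missing or wrong as written. The first is the energy identity: your display $\frac{d}{dt}\widetilde{\calW}(V(t))=-\int_0^1|a(\tfrac12-z)|^2\,dz=-\int_\RR|\achapo_u|^2\,\rho(t,dx)$ is only valid before quantiles collide. Once $\rho$ has an atom occupying a quantile interval $(z_1,z_2)$, every $z$ in that interval moves with the common Vol'pert velocity $\achapo_u=\frac{1}{z_2-z_1}\int_{z_1}^{z_2}a(\tfrac12-s)\,ds$, so the correct dissipation is $\int_0^1 a(\tfrac12-z)\,\partial_t V\,dz=\int_0^1|\achapo_u(V(z))|^2\,dz=\int_\RR|\achapo_u|^2\,\rho(dx)$, which by Jensen's inequality is \emph{strictly} smaller than $\int_0^1|a(\tfrac12-z)|^2\,dz$ whenever $a$ is non-constant across an atom. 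Since the whole point of the theorem is the regime after blow-up, the identity must be derived using $\partial_t V(t,z)=\achapo_u(V(t,z))$ for a.e.\ $z$, atom ranges included; your final formula is the right one, but the chain of equalities you propose does not yield it.

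The second gap is the Wasserstein-side identification, which you assert rather than prove: that $-\achapo_u$ belongs to $\partial\calW(\rho)$ and is its element of minimal $L^2(\rho)$-norm, and that the cone-constrained Hilbert flow is a gradient flow in the sense of Definition \ref{defgradflow}. This is the substantive part of the paper's proof: using Brenier's theorem and the monotonicity of the optimal map $\theta$ one gets the exact identity $\calW(\theta_\#\rho)-\calW(\rho)=-\int\achapo_u(x)(\theta(x)-x)\,\rho(dx)$, and minimality follows by testing with perturbations $(\mathrm{id}+\eps\xi)_\#\rho$, the slope \eqref{slope} and \eqref{slopmin}; Theorem 11.1.3 of \cite{Ambrosio} then converts curves of maximal slope into gradient flows. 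You correctly flag the compatibility at shocks (projection onto the cone $\mathcal{C}$, Vol'pert average versus Rankine--Hugoniot speed) as ``the technical keystone,'' but you leave it unproven, whereas the paper absorbs it into the subdifferential computation \eqref{aVolpert} and the aggregate dynamics \eqref{xibis} (note that the paper's JKO step is solved as an unconstrained minimization with explicit Euler--Lagrange equation, so the projection issue you emphasize never has to be confronted head-on there). As it stands, your proposal identifies the right difficulties but does not close them.
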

Notice that the equivalence between entropy solutions and gradient flow solutions 
(item $(ii)$) has been observed independently in \cite{bonaschi} 
in the linear case $a=Id$ and for $W(x)=\pm |x|$ (including then a repulsive case).

\begin{proof}
$(i)$ We use the ideas of Section 2 of \cite{Carrillo}
recalled in the beginning of this paper, Section \ref{sec:Carrillo}.
The proof is divided into several steps.

We first notice that due to the one dimensional framework, we can simplify the computations
by working in the Hilbert space $L^2(-\frac 12,\frac 12)$.
In fact, the function $u=W'*\rho$ is, up to a constant, the cumulative distribution of $-\rho$,
since $\rho=-\pa_x u$.
Using the definition \eqref{aVolpert}, since $\achapo_u$ is arbitrary on $S_u\setminus J_u$,
we have~:
$$
\int_\RR x \achapo_u \rho(dx) = \int_{\RR\setminus J_u} x a(u(x)) \rho(dx) + 
\sum_{x\in J_u} x \frac{A(u(x^+))-A(u(x^-))}{u(x^+)-u(x^-)} (u(x^+)-u(x^-)).
$$
We introduce the generalized inverse $v$ of $u$ whose definition is
$$
v(t,z)=u^{-1}(t,z) := \inf \{ x\in \RR / u(t,x)>z\}.
$$
Since $u$ is nonincreasing, $v$ is nonincreasing.
For $x\in J_u$, we denote $z_x^-=u(x^-)$ and $z_x^+=u(x^+)$.
We deduce after a change of variable that
$$
\calW(\rho)= - \int_{(-\frac 12,\frac 12)\setminus u(J_u)} v(t,z) a(z) dz
- \sum_{x\in J_u} v(t,z) (A(z_x^+)-A(z_x^-)).
$$
By definition, the function $v$ is constant on the set $u(J_u)$. We deduce,
recalling that $A$ is an antiderivative of $a$,
$$
\sum_{x\in J_u} v(t,z) (A(z_x^+)-A(z_x^-)) = \sum_{x\in J_u}
\int_{z_x^-}^{z_x^+} v(t,z) a(z) dz.
$$
Thus, we can rewrite the functional $\calW$ as
\beq\label{eq:WL2}
\calW(\rho) = \widetilde{\calW}(v) :=-\int_{-1/2}^{1/2} v(t,z) a(z) \,dz.
\eeq
We define moreover $v^{ini}= (u^{ini})^{-1}$, where 
$u^{ini}=\frac 12\big(\rho^{ini}(x,+\infty)-\rho^{ini}(-\infty,x)\big)$.

$\bullet$ {\bf $-\achapo_u$ is the unique element of minimal $L^2(\rho)$-norm 
in the subdifferential $\pa\calW$ of $\calW$.}

Let $\rho\in \calP_2(\RR)$ and $u=\frac 12(\rho(x,+\infty) -\rho(-\infty,x))$.
As above, we denote $v$ the pseudo-inverse of $u$.
We first show that $-\achapo_u\in \pa\calW(\rho)$,
where $\achapo_u$ is defined in \eqref{aVolpert}. 
From \cite[Definition 10.3.1]{Ambrosio} (see also equation (10.3.12) of the same book), 
it means that for all $\mu$ in $\calP_2(\RR)$, we have
\beq\label{adW}
\calW(\mu)-\calW(\rho) \geq 
\inf_{\gamma\in \Gamma_0(\rho,\mu)}\int_\RR -\achapo_u(x)(y-x)\,\gamma(dx,dy) + o(d_{W2}(\mu,\rho)).
\eeq
For $\mu\in \calP_2(\RR)$ we denote $u_\mu=W'*\mu$ and $v_\mu$ its pseudo-inverse.
As for \eqref{eq:WL2} with $\mu$ instead of $\rho$, we deduce~:
\beq\label{eq:Wmurho1}
\calW(\mu)-\calW(\rho) = - \int_{-1/2}^{1/2} a(z) \big(v_\mu(z)-v(z)\big)\,dz.
\eeq

We have recalled in Subsection \ref{sec:1D} that in one dimension, the set of optimal 
map is given by $\Gamma_0(\rho,\mu)=\{(v,v_\mu)_\#{\LL}_{(-1/2,1/2)} \}$.
Therefore we have that for $\gamma \in \Gamma_0(\rho,\mu)$,
$$
\int_\RR -\achapo_u(x)(y-x)\,\gamma(dx,dy) = - \int_{-1/2}^{1/2} \achapo_u(v(z)) \big(v_\mu(z)-v(z)\big)\,dz.
$$
Let us consider then the quantity 
$$
R := \int_{-1/2}^{1/2} \big(\achapo_u(v(z))-a(z)\big) \big(v_\mu(z)-v(z)\big)\,dz.
$$
We have from \eqref{eq:Wmurho1}
\beq\label{eq:Wmurho2}
\calW(\mu)-\calW(\rho) = - \int_{-1/2}^{1/2} \achapo_u(v(z)) \big(v_\mu(z)-v(z)\big)\,dz
+ R.
\eeq

Using definition \eqref{aVolpert}, we have that on $(-\frac 12,\frac 12)\setminus u(J_u)$, 
$\achapo_u(v(z))=a(z)$. 
For $x\in J_u$, we denote $z_x^-=u(x^-)$ and $z_x^+=u(x^+)$, we have 
$$
\forall \,z\in (z_x^-,z_x^+), \quad
\achapo_u(v(z))= \frac{A(z_x^+)-A(z_x^-)}{z_x^+-z_x^-}
=\frac{1}{z_x^+-z_x^-} \int_{z_x^-}^{z_x^+} a(y)\,dy.
$$
Therefore, we can rewrite
$$
R = \sum_{x\in J_u} \frac{1}{z_x^+-z_x^-} \int_{z_x^-}^{z_x^+}\int_{z_x^-}^{z_x^+} \big(a(y)-a(z)\big)
\big(v_\mu(z)-v(z)\big)\,dydz.
$$
Hence, if $v_\mu-v$ is piecewise constant on $(z_x^-,z_x^+)$ for each $x\in J_u$, we have that
$R=0$. By definition, for each $x\in J_u$, we have that $v$ is constant on $(z_x^-,z_x^+)$.
Let $(v_\mu^n)_{n\in \NN}$ be a sequence of approximation of $v_\mu$ such that $v_\mu^n$ 
is piecewise constant on $(z_x^-,z_x^+)$ for all $x\in J_u$ and converge in $L^2(-\frac 12,\frac 12)$ 
and a.e. towards $v_\mu$.
Moreover $v^n_\mu$ is the monotone rearrangement of $\mu_n= -\pa_x u^n_\mu$ where
$u^n_\mu(x)=\sup\{z\in (-\frac 12,\frac12) / v^n_\mu(z)< x\}$.
We deduce from the above discussion and from \eqref{eq:Wmurho2} that
$$
\calW(\mu_n)-\calW(\rho) = - \int_{-1/2}^{1/2} \achapo_u(v(z)) \big(v_\mu^n(z)-v(z)\big)\,dz.
$$
Using the Fatou Lemma, we conclude by letting $n\to +\infty$
$$
\calW(\mu)-\calW(\rho) \geq - \int_{-1/2}^{1/2} \achapo_u(v(z)) \big(v_\mu(z)-v(z)\big)\,dz
$$

\medskip

To prove that $-\achapo_u$ is an element of minimal norm 
in $\pa\calW(\rho)$, we consider $\xi\in C^\infty\cap Lip(\RR)$
and for $\eps>0$ small enough such that $(id+\eps\xi)$ is increasing.
We have
$$
\calW((id+\eps\xi)_\# \rho)= -\int_\RR (id+\eps\xi)(x) \achapo_{u_\eps}(x+\eps\xi(x))\rho(dx),
$$
where $u_\eps=W'*((id+\eps\xi)_\# \rho)$. 
We notice that for any increasing function $\theta$, we have
$$
\begin{array}{ll}
\ds W'*(\theta_\#\rho)(\theta(x))
& \ds = \frac 12\big(\rho\big(\theta^{-1}(\theta(x),+\infty)\big)-\rho\big(\theta^{-1}(-\infty,\theta(x))\big)\big)  \\[2mm]
& \ds = \frac 12\big(\rho\big(\{ y\in\RR / \theta(y)>\theta(x)\}\big) -\rho\big(\{ y\in\RR / \theta(y)<\theta(x)\}\big)\big).
\end{array}
$$
Due to the monotonicity of the function $\theta$, we have
$\{ y\in\RR / \theta(y)<\theta(x)\} = \{y\leq x\}$.
Hence $W'*(\theta_\#\rho)(\theta(x))=W'*\rho(x)$. We deduce
$$
\calW((id+\eps\xi)_\# \rho)= -\int_\RR (id+\eps\xi)(x) \achapo_u(x)\rho(dx).
$$
Thus,
$$
\lim_{\eps\to 0}\frac{\calW((id+\eps\xi)_\#\rho)-\calW(\rho)}{\eps} =
-\int_\RR \achapo_u(x)\xi(x)\,d\rho(x).
$$
Then, from the definition of the slope \eqref{slope}, we have
$$
\liminf_{\eps\searrow 0} \frac{\calW((id+\eps\xi)_\#\rho)-\calW(\rho)}{d_{W2}((id+\eps\xi)_\#\rho,\rho)} 
\geq - |\pa \calW|(\rho).
$$
We deduce from above that
$$
\int_\RR \achapo_u(x)\xi(x)\,d\rho(x) \leq |\pa \calW|(\rho)  \liminf_{\eps\searrow 0}
\frac{d_{W2}((id+\eps\xi)_\#\rho,\rho)}{\eps} \leq |\pa \calW|(\rho) \|\xi\|_{L^2(\rho)},
$$
where we use the inequality $d_{W2}(S_\#\rho,T_\#\rho) \leq \|S-T\|_{L^2(\rho)}$ for the
last inequality. By the same token with $-\xi$ instead of $\xi$, we deduce
$$
\Big|\int_\RR \achapo_u(x)\xi(x)\,d\rho(x)\Big|
\leq |\pa \calW|(\rho) \|\xi\|_{L^2(\rho)}.
$$
Since $\xi$ is arbitrary, we get $\|\achapo_u\|_{L^2(\rho)}\leq |\pa\calW|(\rho)$. 
We conclude by using \eqref{slopmin}.

$\bullet$ {\bf Well-posedness and convergence of the JKO scheme.}

The JKO scheme is defined in \eqref{JKO} where $\tau>0$ is a given small time step. 
Thus, using \eqref{dWF-1} and \eqref{eq:WL2}, the minimization problem 
defined in \eqref{JKO} is equivalent to~: 
let $v_k^{\tau}\in L^2(-\frac 12,\frac 12)$, we define
\beq\label{JKObis}
v_{k+1}^{\tau}= \underset{v\in L^2(-1/2,1/2), \pa_zv\leq 0}{\arg\min} \Big\{
\widetilde{\calW}(v) +\frac{1}{2\tau} \|v-v_k^\tau\|^2 \Big\}.
\eeq
We recall that $a$ is a nondecreasing function, then 
$a(-\frac 12)\leq a(z) \leq a(\frac 12)$ when $z\in (-\frac 12,\frac 12)$.
Then the functional defined inside the brackets is clearly lower 
semi-continuous, convex and coercive on $L^2(-\frac 12,\frac 12)$. 
We deduce that the above minimization problem \eqref{JKObis} 
admits an unique solution.
Moreover, computing the Fr\'echet derivative of the functional 
defining \eqref{JKObis} in the $L^2$-metric, the Euler-Lagrange
equation associated to this minimization problem implies
$$
v^\tau_{k+1}(z) = v_k^\tau(z) - \tau a(z), 
\qquad z\in \big(-\frac 12,\frac 12 \big).
$$
This is an implicit Euler discretization of the equation 
$\pa_t v(t,z) + a(z)=0$. 
The function $a$ being nondecreasing, it is clear that if $v_k^\tau$
is nonincreasing, then $v_{k+1}^\tau$ is nonincreasing.
It is well known that the piecewise
constant interpolation $v^\tau$ defined by $v^\tau(0)=v^{ini}$ 
and $v^\tau(t)=v_k^\tau$ if $t\in (k\tau,(k+1)\tau]$, converges
in $L^2(-\frac 12,\frac 12)$ as $\tau\to 0$ towards $v(t)$ for all 
$t\in [0,+\infty)$, where $v(t,z)= v^{ini}-t a(z)$.
Moreover, we have the energy estimate
$$
\widetilde{\calW}(v) = \widetilde{\calW}(v^{ini}) - 
\int_0^t \int_{-1/2}^{1/2} |a(z)|^2\,dzds.
$$
We define then $\rho:=-\pa_x u$, where 
$u(t,x):=v^{-1}(t,z)=\sup\{z\in [-\frac 12,\frac 12] /~ v(t,z)<x\}$. 
We have $\rho \in \calP_2(\RR)$, and decomposing the integral
between the regular part and the jump part as above, we state that for $s\in (0,t)$,
$$
\int_{-1/2}^{1/2} |a(z)|^2\,dz = \int_\RR |\achapo_u(s,x))|^2 \rho(s,dx).
$$
We deduce from the latter energy estimate,
\beq\label{nrjestim_anonid}
\calW(\rho)=\calW(\rho^{ini}) - \int_0^t \int_\RR |\achapo_u(s,x))|^2 \rho(s,dx).
\eeq
Moreover, we have from the equation $\pa_t v(t,z)+a(z)=0$ that 
the generalized inverse $u$ solves in the weak sense
$\pa_t u + \achapo_u\pa_x u =0$. Therefore, in the sense of distributions, we have 
$$
\pa_t \rho + \pa_x(\achapo_u\rho)= 0.
$$

From Theorem 8.3.1 of \cite{Ambrosio}, we deduce that 
$\rho\in AC_{loc}^2((0,+\infty);\calP_2(\RR))$.
We conclude, using moreover \eqref{nrjestim_anonid} that
$\rho$ is a curve of maximal slope for the functional $\calW$ 
defined in \eqref{Wanonid}.

$\bullet$ {\bf Gradient flow solutions.}

This last step is a direct consequence of Theorem 11.1.3 of \cite{Ambrosio}
since all assumptions of the Theorem have been verified above.
Thus curves of maximal slope are gradient flow solutions.
Uniqueness is obtained thanks to a contraction estimate based on
a Gronwall argument as in \cite{Carrillo}.
This concludes the proof of the first item of Theorem \ref{th:GFanonid}.

$(ii)$ From Proposition \ref{AC2}, we deduce that the duality solution of
Theorem \ref{dual_anonid} belongs to $AC_{loc}^2((0,+\infty);\calP_2(\RR))$.
Then by uniqueness of duality solutions and gradient flow solutions,
we deduce that both notion of solutions coincide.
Moreover, from Proposition \ref{entropy} with $w=0$, equation \eqref{eq:U}
reduces to
$$
\pa_t u + \pa_x A(u) = 0.
$$
It is well-known that this scalar conservation law admits an unique nonincreasing solution
which is the entropy solution (see e.g. \cite[Lemma 3.3]{BJpg}).
Then $\rho=-\pa_xu$.
\end{proof}

\newpage

\appendix

\section*{Technical Lemma}

In this appendix we state a technical lemma which is used in the paper.
\begin{lemma}\label{lemA}
Let us assume that $W$ satisfies assumptions {\bf (A0)--(A4)}.
Let $(\rho_n)_{n\in \NN}$ be a sequence of measures such that
$\rho_n \rightharpoonup \rho$ weakly in $\calM_b(\RR)$. Then
$$
\lim_{n\to +\infty} \int_{x\neq y} W'(x-y) \rho_n(dy) = 
\int_{x\neq y} W'(x-y) \rho(dy), \quad \mbox{ for a.e. } x \in \RR.
$$
\end{lemma}
\begin{proof}
We consider a regularization of $W$ by $W_\eps$ such that for all $\eps>0$, 
$W_\eps \in C^1(\RR^d)$, $W_\eps(-x)=W_\eps(x)$, 
$W_\eps$ and $W'_\eps$ uniformly bounded with respect to $\eps$, and
\beq\label{boundWapprox}
\sup_{x\in \RR\setminus (-\eps,\eps)} |W'_\eps(x)-W'(x)|\leq \eps.
\eeq
By definition of the weak convergence, we have
\beq\label{ineqA1}
\lim_{n\to +\infty} \int_{x\neq y} W'_\eps(x-y) \rho_n(dy) = 
\int_{x\neq y} W'_\eps(x-y) \rho(dy), \quad \mbox{ for a.e. } x \in \RR.
\eeq
In fact, we can remove the point $y=x$ in the integral since by construction
$W'_\eps$ is odd, then $W'_\eps(0)=0$.
Moreover for all $n\in \NN$, we have that
$$
\begin{array}{ll}
\ds \Big|\int_{x\neq y} (W'_\eps-W')(x-y) \rho_n(dy)\Big| = &\ds
\Big|\int_{(x-\eps,x+\eps)\setminus\{x\}} (W'_\eps-W')(x-y) \rho_n(dy)\Big|  \\[2mm]
&\ds +\Big|\int_{\RR \setminus (x-\eps,x+\eps)} (W'_\eps-W')(x-y) \rho_n(dy)\Big|.
\end{array}
$$
We can bound the first term of the right hand side by 
$C \rho_n((x-\eps,x+\eps)\setminus\{x\}) \to 0$ when $\eps \to 0$, where $C$ stand
for a nonnegative constant.
For the second term, we use the property \eqref{boundWapprox} to state the 
convergence towards $0$ as $\eps\to 0$. We deduce that 
$$
\lim_{\eps\to 0} \int_{x\neq y} W'_\eps(x-y) \rho_n(dy) = 
\int_{x\neq y} W'(x-y) \rho_n(dy),
$$
and by the same token with $\rho$ instead of $\rho_n$, we have
$$
\lim_{\eps\to 0} \int_{x\neq y} W'_\eps(x-y) \rho(dy) = 
\int_{x\neq y} W'(x-y) \rho(dy).
$$
We conclude by passing into the limit $\eps\to 0$ in \eqref{ineqA1}.
\end{proof}

{\bf Acknowledgement.} The authors acknowledge warmly the anonymous referee
of this paper for his/her valuable comments that allowed us to improve this
work. The second author acknowledges partial support from the ANR Programme Blanc
project KIBORD, \#ANR-13-BS01-0004.



\begin{thebibliography}{99}
\bs

\bibitem{ambrosioBV} L. Ambrosio, N. Fusco, D. Pallara, 
{\bf Functions of Bounded Variation and Free Discontinuity Problems},
  Oxford University Press, 2000

\bibitem{Ambrosio} L. Ambrosio, N. Gigli, G. Savar\'e, {
\bf Gradient flows in metric space of probability measures}, 
Lectures in Mathematics, Birk\"auser, 2005

\bibitem{balague} D. Balagu\'e, J.A. Carrillo, {\it Aggregation equation with growing at 
    infinity attractive-repulsive potentials},
In Hyperbolic problems--theory, numerics and applications. Volume 1, volume 17 of Ser. Contemp.
Appl. Math. CAM, pp. 136--147. World Sci. Publishing, Singapore, 2012.

\bibitem{benedetto} D. Benedetto, E. Caglioti, M. Pulvirenti, 
{\it A kinetic equation for granular media}, RAIRO Model. Math. Anal. Numer.,
{\bf 31} (1997), 615-641

\bibitem{Bertozzi1} A.L. Bertozzi, J. Brandman, 
{\it Finite-time blow-up of $L^\infty$-weak solutions of an aggregation equation}, 
Comm. Math. Sci., {\bf 8} (2010), n$^o$ 1, 45-65

\bibitem{Bertozzi2} A.L. Bertozzi, J.A. Carrillo, T. Laurent, {\it Blow-up
  in multidimensional aggregation equation with mildly singular interaction
kernels}, Nonlinearity {\bf 22} (2009), 683-710

\bibitem{Bertozzi3} A.L. Bertozzi, T. Laurent, {\it Finite-time blow-up 
of solutions of an aggregation equation in $\RR^n$}, Comm. Math. Phys., {\bf 274}
(2007), 717-735

\bibitem{Bertozzi4} A.L. Bertozzi, T. Laurent, F. L\'eger, {\it Aggregation 
and spreading via the Newtonian potential:
the dynamics of patch solutions}, 
Math. Models Methods Appl. Sci., {\bf 22}(suppl. 1):1140005, 39, 2012.

\bibitem{Bertozzi5} A.L. Bertozzi, T. Laurent, J. Rosado, {\it $L^p$ theory for the multidimensional aggregation equation}, Comm. Pure Appl. Math., {\bf 64}(1) (2011) 45--83.

\bibitem{Bianchini} S. Bianchini, M. Gloyer,
{\it An estimate on the flow generated by monotone operators},
Comm. Partial Diff. Eq., {\bf 36} (2011), n$^o$ 5, 777-796

\bibitem{BCC} A. Blanchet, V. Calvez, J.A. Carrillo, {\it Convergence of 
  the mass-transport steepest descent scheme for the sub-critical Patlak-Keller-Segel model}, 
SIAM J. Numer. Anal. {\bf 46} (2008), 691-721 

\bibitem{bonaschi} G.A. Bonaschi, J.A. Carrillo, M. Di Francesco, M.A. Peletier,
  {\it Equivalence of gradient flows and entropy solutions for singular nonlocal 
    interaction equations in 1D}, arXiv:1310.4110

\bibitem{bj1} F. Bouchut, F. James, 
{\it One-dimensional transport equations with discontinuous coefficients},
Nonlinear Analysis TMA, {\bf 32}  (1998), n$^o$ 7, 891-933

\bibitem{BJpg} F. Bouchut, F. James,
{\it Duality solutions for pressureless gases, monotone scalar conservation laws, and uniqueness},
Comm. Partial Differential Eq., {\bf 24} (1999), 2173-2189

\bibitem{bujama} F. Bouchut, F. James, S. Mancini,
{\it Uniqueness and weak stability for multidimensional transport equations with one-sided Lipschitz coefficients},
Ann. Scuola Norm. Sup. Pisa Cl. Sci. (5), {\bf IV} (2005), 1-25

\bibitem{Brenier} Y. Brenier, {\it Polar factorization and monotone 
  rearrangement of vectorvalued functions}, Comm. Pure and Appl. Math.
  {\bf 44} (1991), 375-417

\bibitem{burger} M. Burger, V. Capasso, D. Morale, 
{\it On an aggregation model with long and short range interactions}, 
Nonlinear Analysis RWA, {\bf 8} (2007), n$^o$ 3, 939-958

\bibitem{Carrillo} J. A. Carrillo, M. DiFrancesco, A. Figalli, T. Laurent, D. Slep\v{c}ev,
  {\it Global-in-time weak measure solutions and finite-time aggregation for
    nonlocal interaction equations}, Duke Math. J. {\bf 156} (2011), 229-271

\bibitem{GME} J.A. Carrillo, R.J. McCann, C. Villani, 
{\it Kinetic equilibration rates for granular media and related equations: entropy dissipation and mass transportation estimates}, 
Rev. Matematica Iberoamericana, {\bf 19} (2003), 1-48

\bibitem{pieton} R.M. Colombo, M. Garavello, M. L\'ecureux-Mercier, 
  {\it A class of nonlocal models for pedestrian traffic}, 
  Math. Models Methods Appl. Sci., (2012) {\bf 22}(4):1150023, 34.

\bibitem{pieton2} G. Crippa, M. L\'ecureux-Mercier, {\it Existence and 
  uniqueness of measure solutions for a system of continuity equations with non-local flow}, NoDEA Nonlinear Differential Equations Appl., (2013) {\bf 20}(3):523--537.

\bibitem{dolschmeis} Y. Dolak, C. Schmeiser, {\it Kinetic models for
chemotaxis: Hydrodynamic limits and spatio-temporal mechanisms}, 
J. Math. Biol., {\bf 51} (2005), 595-615

\bibitem{filblaurpert} F. Filbet, Ph. Lauren\c{c}ot, B. Perthame, 
{\it Derivation of hyperbolic models for chemosensitive movement}, 
J. Math. Biol., {\bf 50} (2005), 189-207

\bibitem{Filippov} A.F. Filippov,
{\it Differential Equations with Discontinuous Right-Hand Side},
A.M.S. Transl. (2) {\bf 42} (1964), 199-231

\bibitem{CRAS} F. James, N. Vauchelet, {\it A remark on duality solutions 
for some weakly nonlinear scalar conservation laws}, 
C. R. Acad. Sci. Paris, S\'er. I, {\bf 349} (2011), 657-661


\bibitem{jamesnv} F. James, N. Vauchelet,
  {\it Chemotaxis: from kinetic equations to aggregation dynamics}, Nonlinear
  Diff. Eq. and Appl. (NoDEA), {\bf 20} (2013), n$^o 1$, 101-127

\bibitem{hyp_proc} F. James, N. Vauchelet, 
  {\it Numerical simulation of a hyperbolic model for chemotaxis after blow-up}, 
  to appear in Proceedings of 14 conference Hyperbolic problems, Padova, Italy 
  {\tt http://hal.archives-ouvertes.fr/hal-00772653/fr/}.

\bibitem{numerix} F. James, N. Vauchelet, {\it Numerical methods for 
  one-dimensional aggregation equation},
submitted.

\bibitem{JKO} R. Jordan, D. Kinderlehrer, F. Otto, 
  {\it The variational formulation of the Fokker-Planck equation}, 
  SIAM J. Math. Anal., {\bf 29} (1998), 1-17

\bibitem{keller} E.F. Keller, L.A. Segel, 
  {\it Initiation of slime mold aggregation viewed as an instability}, 
  J. Theor. Biol., {\bf 26} (1970), 399-415

\bibitem{li} H. Li, G. Toscani, {\it Long time asymptotics of kinetic models of granular flows}, 
  Arch. Rat. Mech. Anal., {\bf 172} (2004), 407-428

\bibitem{morale} D. Morale, V. Capasso, K. Oelschl\"ager, 
  {\it An interacting particle system modelling aggregation behavior: 
    from individuals to populations}, J. Math. Biol., {\bf 50} (2005), 49-66

\bibitem{NPS} J. Nieto, F. Poupaud, J. Soler, 
  {\it High field limit for Vlasov-Poisson-Fokker-Planck equations}, 
  Arch. Rational Mech. Anal., {\bf 158} (2001), 29-59

\bibitem{Otto} F. Otto, 
  {\it The geometry of dissipative evolution equations: the porous medium equation}, 
  Comm. Partial Diff. Eq., {\bf 26} (2001), 101-174

\bibitem{okubo} A. Okubo, S. Levin, 
  {\bf Diffusion and Ecological Problems: Modern Perspectives}, Springer, Berlin, 2002

\bibitem{patlack} C.S. Patlak, {\it Random walk with persistence and external bias}, 
  Bull. Math. Biophys., {\bf 15} (1953), 311-338

\bibitem{Poupaud} F. Poupaud, M. Rascle,
  {\it Measure solutions to the linear multidimensional transport equation
    with discontinuous coefficients},
  Comm. Partial Diff. Equ., {\bf 22} (1997), 337-358

\bibitem{PoupaudDef} F. Poupaud,
  {\it Diagonal defect measures, adhesion dynamics and Euler equation},
  Methods Appl. Anal., {\bf 9} (2002), n$^o$ 4, 533-561

\bibitem{rachev} S. T. Rachev and L. R\"uschendorf, {\bf Mass Transportation Problems. Vol. I. Theory}, 
  Probab. Appl. (N. Y.), Springer-Verlag, New York, 1998.

\bibitem{Villani1} C. Villani, {\bf Optimal transport, old and new}, 
  Grundlehren der Mathematischen Wissenschaften 338, Springer, 2009

\bibitem{Villani2} C. Villani, {\bf Topics in optimal transportation}, 
  Graduate Studies in Mathematics {\bf 58}, Amer. Math. Soc, Providence, 2003

\bibitem{volpert} A.I. Vol'pert, {\it The spaces BV and quasilinear equations},
  Math. USSR Sb., {\bf 2} (1967), 225-267

\end{thebibliography}
\end{document}